\newtheorem{theorem}{Theorem}[section]
\newtheorem{lemma}[theorem]{Lemma}
\newtheorem{proposition}[theorem]{Proposition}
\theoremstyle{definition}
\newtheorem{definition}[theorem]{Definition}
\newtheorem{example}[theorem]{Example}
\newtheorem{remark}[theorem]{Remark}
\numberwithin{equation}{section}
\definecolor{darkred}{rgb}{0.9,0,0}
\definecolor{darkgreen}{rgb}{0,0.46,0}
\definecolor{purple}{rgb}{0.6,0,0.5}
\newcommand{\pc}{\color{darkred}}
\newcommand{\jl}{\color{purple}}
\newcommand{\fin}{\color{black}}
\newcommand{\tens}[1]{\boldsymbol{\mathcal{#1}}}
\newcommand{\tenselem}[1]{\mathcal{#1}}
\newcommand{\matr}[1]{\boldsymbol{#1}}
\newcommand{\vect}[1]{\boldsymbol{#1}}
\newcommand{\set}[1]{\mathscr{#1}}
\newcommand{\T}{{\sf T}}        
\newcommand{\rank}[1]{\mathop{\operator@font rank}\{#1\}}
\newcommand{\colrank}[1]{\mathop{\operator@font colrank}\{#1\}}
\newcommand{\krank}[1]{\mathop{\operator@font krank}\{#1\}}
\newcommand{\trace}[1]{\mathop{\operator@font trace}\{#1\}}
\newcommand{\Diag}[1]{\mathop{\operator@font Diag}\{#1\}}    
\newcommand{\diag}[1]{\mathop{\operator@font diag}\{#1\}}    
\newcommand{\Span}[1]{\mathop{\operator@font Span}\{#1\}}    
\newcommand{\argmin}{\mathop{\operator@font argmin}}
\newcommand{\offdiag}[1]{\mathop{\operator@font offdiag}\{#1\}}    
\newcommand{\Proj}[2]{\mathop{\operator@font Proj_{#1}}{#2}}
\newcommand{\ProjGrad}[2]{\mathop{{\operator@font grad}}#1(#2)}
\newcommand{\eqdef}{\stackrel{\sf def}{=}}
\newcommand{\RR}{\mathbb{R}}
\newcommand{\NN}{\mathbb{N}}
\newcommand{\ON}[1]{\set{O}_{#1}}
\newcommand{\SON}[1]{\set{SO}_{#1}}
\newcommand{\Gmat}[3]{\matr{G}^{(#1,#2,#3)}}
\newcommand{\contr}[1]{\mathop{\bullet_{#1}}}   
\DeclareMathOperator*{\argmax}{\arg\!\max}
\def\bbordermatrix#1{\begingroup \m@th
  \@tempdima 4.75\p@
  \setbox\z@\vbox{%
    \def\cr{\crcr\noalign{\kern2\p@\global\let\cr\endline}}%
    \ialign{$##$\hfil\kern2\p@\kern\@tempdima&\thinspace\hfil$##$\hfil
      &&\quad\hfil$##$\hfil\crcr
      \omit\strut\hfil\crcr\noalign{\kern-\baselineskip}%
      #1\crcr\omit\strut\cr}}%
  \setbox\tw@\vbox{\unvcopy\z@\global\setbox\@ne\lastbox}%
  \setbox\tw@\hbox{\unhbox\@ne\unskip\global\setbox\@ne\lastbox}%
  \setbox\tw@\hbox{$\kern\wd\@ne\kern-\@tempdima\left[\kern-\wd\@ne
    \global\setbox\@ne\vbox{\box\@ne\kern2\p@}%
    \vcenter{\kern-\ht\@ne\unvbox\z@\kern-\baselineskip}\,\right]$}%
  \null\;\vbox{\kern\ht\@ne\box\tw@}\endgroup}
\begin{document}
\makeatletter

\begin{center}
\large{\bf Jacobi-type algorithm for low rank orthogonal approximation of symmetric tensors and its convergence analysis\footnote{This work was supported in part by the National Natural Science Foundation of China (No. 11601371, 11701327) and the  ANR (Agence Nationale de Recherche) under  grant LeaFleT (ANR-19-CE23-0021). }}
\end{center}\vspace{5mm}
\begin{center}

\textsc{Jianze Li\footnote{Shenzhen Research Institute of Big Data, The Chinese University of Hong Kong, Shenzhen, China (lijianze@gmail.com)}, Konstantin Usevich\footnote{Universit\'{e} de Lorraine, CNRS, CRAN, Nancy, France (konstantin.usevich@univ-lorraine.fr)}, and Pierre Comon\footnote{Univ. Grenoble Alpes, CNRS, Grenoble INP, GIPSA-Lab, France (pierre.comon@gipsa-lab.fr)}}\end{center}

\vspace{2mm}

\footnotesize{
\noindent\begin{minipage}{14cm}
{\bf Abstract:}
In this paper,
we propose a Jacobi-type algorithm to solve the low rank orthogonal approximation problem of symmetric tensors.
This algorithm includes as a special case the well-known Jacobi CoM2 algorithm for the approximate orthogonal diagonalization problem of symmetric tensors. 
We study the global convergence of this algorithm under a gradient based ordering for a special case:
the best rank-2 orthogonal approximation of 3rd order symmetric tensors,
and prove that an accumulation point is the unique limit point under some conditions.
We also propose a proximal variant of this algorithm in general case, and prove its global convergence without any further condition.
Numerical experiments are presented to show the efficiency of this algorithm.
\end{minipage}
 \\[5mm]

\noindent{\bf Keywords:} {Jacobi-type algorithm; low rank orthogonal approximation; symmetric tensors; weak convergence; global convergence}\\
\noindent{\bf Mathematics Subject Classification:} {15A69, 15A23, 49M30, 65F99, 26E05}

\hbox to14cm{\hrulefill}\par


\section{Introduction}

As the higher order analogue of vectors and matrices, 
in the last two decades,
tensors have been attracting more and more attentions from various fields,
including signal processing, numerical linear algebra and machine learning
\cite{Cichocki15:review,comon2014tensors,Como10:book,kolda2009tensor,sidiropoulos2017tensor,Anan14:latent}.
One reason is that more and more real data are naturally represented in tensor form, e.g. \emph{hyperspectral images}, \emph{fMRI data}, or \emph{social networks}. 
The other reason is that, compared with the matrix case, tensor based techniques can capture higher order and more complicated relationships, e.g. \emph{Independent Component Analysis} (ICA)  based on the cumulant tensor \cite{Como94:sp}, and \emph{multilinear subspace learning} methods \cite{lu2013multilinear}.

Low rank approximation  of higher order tensors is a very important problem and has been applied in various areas \cite{Como10:book,de1997signal,smildemulti}.  However,
it is much more difficult than the matrix case,
since it is ill-posed for many ranks,
and this ill-posedness is not rare for 3rd order tensors \cite{de2008tensor}.

\textbf{Notation.}
Let $\RR^{n_1\times\cdots\times n_d}\eqdef\RR^{n_1}\otimes\cdots\otimes\RR^{n_d}$ 
be the space of $d$-th order real tensors and
$\text{symm}(\RR^{n\times\cdots\times n})\subset\RR^{n\times\cdots\times n}$ be the set of symmetric ones \cite{Comon08:symmetric,qi2017tensor},
whose entries do not change under any permutation of indices.
The identity matrix of size $n$  is denoted by $\matr{I}_n$.
Let $\text{St}(p,n)\subset\RR^{n\times p}$ be the Stiefel manifold with $1\leq p\leq n$.
Let $\ON{n}\subset\RR^{n\times n}$ be the orthogonal group, \textit{i.e.} $\ON{n}=\text{St}(n,n)$. 
Let $\SON{n}\subset\RR^{n\times n}$ be the special orthogonal group,
\textit{i.e.} 
the set of orthogonal matrices with determinant 1.
We denote by $\|\cdot\|$ the Frobenius norm of a tensor or a matrix,
or the Euclidean norm of a vector.
Tensor arrays, matrices, and vectors,  will be respectively denoted by bold calligraphic letters, e.g. $\tens{A}$, with bold uppercase letters, e.g. $\matr{M}$, and with bold lowercase letters, e.g. $\vect{u}$; corresponding entries will be denoted by $\tenselem{A}_{ijk}$, $M_{ij}$, and $u_i$.
Operator $\contr{p}$ denotes contraction on the $p$th index of a tensor; when contracted with a matrix, it is understood that summation is always performed on the second index of the matrix. For instance, $(\tens{A}\contr{1}\matr{M})_{ijk}=\sum_\ell \tenselem{A}_{\ell jk} M_{i\ell}$.
We denote 
$$\tens{A}(\matr{M}) \eqdef \tens{A} \contr{1} \matr{M}^{\T} \contr{2} \cdots \contr{d} \matr{M}^{\T}$$
for convenience in this paper.
For $\tens{A}\in\RR^{n\times\cdots\times n}$ and a fixed set of indices $1\leq k_1<k_2<\cdots<k_m\leq n$,
we denote by $\tens{A}^{(k_1,k_2,\cdots,k_m)}$ the $m$-dimensional $d$-th order subtensor obtained from  $\tens{A}$ by allowing its indices to take values in $\{k_1,k_2,\cdots,k_m\}$ only.

\textbf{Problem statement.}
Let $\tens{A} \in \text{symm}(\RR^{n\times\cdots\times n})$ and $1 \leq p \leq n$.
In this paper,
we study the \emph{best rank-$p$ orthogonal approximation problem}, 
which is to find
\begin{equation}\label{pro-ortho-approxi-solution}
\tens{C}^{*} \eqdef \sum\limits_{k=1}^{p}\sigma_{k}^{*}u_{k}^{*}\otimes\cdots\otimes u_{k}^{*},
\end{equation}
where
\begin{equation}\label{pro-ortho-approxi}
(\sigma_1^{*},\ldots,\sigma^{*}_p, u^{*}_{1},\ldots,u^{*}_{p}) = \argmin_{\substack{ [u_{1},\cdots,u_{p}]\in \text{St}(p,n) \\ \sigma_{k}\in\RR, 1\leq k\leq p}}\|\tens{A} - \sum_{k=1}^{p}\sigma_{k}u_{k}\otimes\cdots\otimes u_{k}\|.
\end{equation}
If $p=1$,
then \eqref{pro-ortho-approxi} is the \emph{best rank-1 approximation} problem
\cite{Lathauwer00:rank-1approximation,kofidis2002best,kolda2011shifted,zhang2001rank,SilvCA16:spl}
of symmetric tensors,
which is equivalent to the \emph{cubic spherical optimization} problem \cite{qi2009z,zhang2012best,Zhang12:MC}.
If $p=n$, by \cite[Proposition 5.1]{chen2009tensor} and \cite[Proposition 5.2]{LUC2018},
we see that \eqref{pro-ortho-approxi} is closely related to the \emph{approximate orthogonal diagonalization problem} for 3rd and 4th order cumulant tensors,
which is in the core of \emph{Independent Component Analysis} (ICA)  \cite{Como92:elsevier,Como94:sp,comon1994tensor},
and finds many applications \cite{Como10:book}.

To our knowledge,
the orthogonal tensor decomposition was first tackled in \cite{Como92:elsevier}, but appeared more formally in \cite{Kolda01:Orthogonal},
in which many examples were presented to illustrate the difficulties of this type of decomposition.
As shown in  \cite{chen2009tensor}, the minimum in problem \eqref{pro-ortho-approxi} exists, and the \emph{low rank orthogonal approximation of tensors} (LROAT) algorithm and \emph{symmetric LROAT} (SLROAT) were developed to solve this problem based on the polar decomposition.
In the special case $p=1$, the two algorithms boil down to the \emph{higher order power method} (HOPM) and \emph{symmetric HOPM} (SHOPM) algorithm \cite{Lathauwer00:rank-1approximation,kofidis2002best,zhang2001rank}. 
More recently, also based on the polar decomposition, a similar algorithm was developed in \cite{pan2018symmetric} to solve problem \eqref{pro-ortho-approxi}, and this algorithm was applied to the image reconstruction task.

\textbf{Contribution.}
The main contributions of this paper can be summarized as follows: 
\begin{itemize}
\item[$\bullet$] We propose a Jacobi-type algorithm  for solving problem \eqref{pro-ortho-approxi}.
This algorithm is exactly the well-known Jacobi CoM2 algorithm \cite{Como10:book} when $p=n$,
and the same as the Jacobi-type algorithm for Tucker approximation proposed in \cite{IshtAV13:simax} when $p=1$.
\item[$\bullet$] Under the gradient based ordering defined in \cite{IshtAV13:simax,LUC2017globally,ULC2019}, 
we prove the global convergence\footnote{the iterations converge to a unique limit point for any starting point.} of this algorithm for 3rd order tensors of rank $p=2$ under some conditions.
\item[$\bullet$] We propose a proximal variant of this Jacobi-type algorithm, and prove its global convergence without any further condition.
\end{itemize}


\textbf{Organization.}
The paper is organized as follows.
In \cref{geometric}, we show that two optimization problems on Riemannian manifold $\text{St}(p,n)$ and orthogonal group $\ON{n}$ are both equivalent to problem \eqref{pro-ortho-approxi},
and then calculate their Riemannian gradients. 
In \cref{section-algorithm},
we propose a Jacobi-type algorithm to solve problem \eqref{pro-ortho-approxi}.
This algorithm includes the well-known Jacobi CoM2 algorithm as a special case.
In \cref{sect-Jacobi-G},
we study the global convergence of this algorithm under the gradient based ordering for the 3rd order tensor and $p=2$ case.
In \cref{sect-Jacobi-GS}, a proximal variant of this algorithm is proposed, and its global convergence is established. 
In \cref{sect-experiment},
we report some numerical experiments showing the efficiency of this algorithm. 

\section{Geometric properties}\label{geometric}

\subsection{Equivalent problems}
Let $\tens{A} \in \text{symm}(\RR^{n\times\cdots\times n})$ and $1 \leq p \leq n$.
Let $\matr{X}\in \text{St}(p,n)$ and
$\widetilde{\tens{W}} =  \tens{A}(\matr{X}).$
One  problem equivalent to \eqref{pro-ortho-approxi} is to find 
\begin{equation}\label{pro-stefiel}
\matr{X}_{*} = \argmax_{\matr{X}\in \text{St}(p,n)}\tilde{f}(\matr{X}),
\end{equation}
where
\begin{equation}\label{eq-cost-func-1}
\tilde{f}(\matr{X}) \eqdef \sum\limits_{i=1}^{p}\widetilde{\tenselem{W}}^2_{i\cdots i}.
\end{equation}

\begin{lemma}\rm(\cite[Proposition 5.1]{chen2009tensor})
Let $\tens{C}^{*}$
be as in \eqref{pro-ortho-approxi}.
Then
\begin{equation*}\label{eq-orthogonal-A-T}
\langle\tens{A}-\tens{C}^{*}, u_{k}^{*}\otimes\cdots\otimes u_{k}^{*}\rangle = 0\ \ \text{and}\ \
\sigma_{k}^{*} = \langle\tens{A}, u_{k}^{*}\otimes\cdots\otimes u_{k}^{*}\rangle\notag
\end{equation*}
for $1\leq k\leq p$.
Moreover,
it holds that
\begin{equation}\label{eq-relation-problem}
\|\tens{A}-\tens{C}^{*}\|^2 = \|\tens{A}\|^2 - \|\tens{C}^{*}\|^2 = \|\tens{A}\|^2 - \sum\limits_{k=1}^{p}(\sigma_{k}^{*})^2.
\end{equation}
\end{lemma}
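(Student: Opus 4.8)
The plan is to exploit the orthonormality of the rank-one symmetric tensors generated by the columns of $\matr{X}=[u_{1},\cdots,u_{p}]\in\text{St}(p,n)$. The crucial observation is that, since $\langle u_{j},u_{k}\rangle=\delta_{jk}$, the $d$-fold tensor powers satisfy
\[
\langle u_{j}\otimes\cdots\otimes u_{j},\,u_{k}\otimes\cdots\otimes u_{k}\rangle=\langle u_{j},u_{k}\rangle^{d}=\delta_{jk},
\]
so that $\{u_{1}\otimes\cdots\otimes u_{1},\cdots,u_{p}\otimes\cdots\otimes u_{p}\}$ is an orthonormal system in $\RR^{n\times\cdots\times n}$. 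Consequently $\tens{C}=\sum_{k}\sigma_{k}\,u_{k}\otimes\cdots\otimes u_{k}$ is an orthonormal expansion, and in particular $\|\tens{C}\|^{2}=\sum_{k}\sigma_{k}^{2}$.

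First I would split the minimization in \eqref{pro-ortho-approxi} into an inner minimization over the scalars $\sigma_{k}$, with the orthonormal vectors $u_{k}$ held fixed, followed by the outer minimization over $\text{St}(p,n)$. For fixed orthonormal $u_{k}$, expanding the square gives
\[
\|\tens{A}-\tens{C}\|^{2}=\|\tens{A}\|^{2}-2\sum_{k=1}^{p}\sigma_{k}\langle\tens{A},u_{k}\otimes\cdots\otimes u_{k}\rangle+\sum_{k=1}^{p}\sigma_{k}^{2},
\]
which is a separable quadratic in the $\sigma_{k}$; setting its gradient to zero yields the minimizer $\sigma_{k}=\langle\tens{A},u_{k}\otimes\cdots\otimes u_{k}\rangle$. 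Evaluated at the global optimizer, this is exactly the second claim, $\sigma_{k}^{*}=\langle\tens{A},u_{k}^{*}\otimes\cdots\otimes u_{k}^{*}\rangle$.

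The orthogonality relation then follows immediately: using the formula for $\sigma_{k}^{*}$ together with orthonormality,
\[
\langle\tens{A}-\tens{C}^{*},u_{k}^{*}\otimes\cdots\otimes u_{k}^{*}\rangle=\langle\tens{A},u_{k}^{*}\otimes\cdots\otimes u_{k}^{*}\rangle-\sum_{j=1}^{p}\sigma_{j}^{*}\,\delta_{jk}=\sigma_{k}^{*}-\sigma_{k}^{*}=0,
\]
i.e. $\tens{C}^{*}$ is the orthogonal projection of $\tens{A}$ onto $\sspan\{u_{k}^{*}\otimes\cdots\otimes u_{k}^{*}\}$. Finally, I would obtain \eqref{eq-relation-problem} by the Pythagorean identity: since $\tens{C}^{*}$ lies in that span, the previous step gives $\langle\tens{A}-\tens{C}^{*},\tens{C}^{*}\rangle=\sum_{k}\sigma_{k}^{*}\langle\tens{A}-\tens{C}^{*},u_{k}^{*}\otimes\cdots\otimes u_{k}^{*}\rangle=0$, whence $\|\tens{A}-\tens{C}^{*}\|^{2}=\|\tens{A}\|^{2}-\langle\tens{A},\tens{C}^{*}\rangle=\|\tens{A}\|^{2}-\|\tens{C}^{*}\|^{2}$, and $\|\tens{C}^{*}\|^{2}=\sum_{k}(\sigma_{k}^{*})^{2}$ again by orthonormality.

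I do not anticipate any serious obstacle: the whole argument reduces to the orthonormality of the tensor powers and the elementary geometry of projection onto a finite-dimensional subspace. The only point requiring a word of care is that the identities are asserted at the \emph{joint} optimum: the closed form $\sigma_{k}=\langle\tens{A},u_{k}\otimes\cdots\otimes u_{k}\rangle$ holds for the optimal scalars associated with \emph{any} fixed orthonormal frame, and in particular for the optimal frame $u_{k}^{*}$, which is precisely what justifies substituting it back when deriving the orthogonality and norm identities.
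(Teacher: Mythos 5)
Your proof is correct. Note that the paper itself gives no proof of this lemma---it is quoted verbatim from \cite[Proposition 5.1]{chen2009tensor}---so there is no in-paper argument to compare against; your derivation (orthonormality of the $d$-fold tensor powers $u_k\otimes\cdots\otimes u_k$, inner minimization of the separable quadratic over the $\sigma_k$, then the projection/Pythagorean identity) is the standard route and fills that gap in a self-contained way. You also correctly flagged the one delicate point, namely that the closed form $\sigma_k^{*}=\langle\tens{A},u_k^{*}\otimes\cdots\otimes u_k^{*}\rangle$ is legitimate at the joint optimum because the scalars must in particular be optimal for the fixed optimal frame $[u_1^{*},\cdots,u_p^{*}]$.
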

%

\begin{remark}\rm\label{remark-equivalence}
(i) Let $\tens{C}^{*}$ be as in \eqref{pro-ortho-approxi}
and $\matr{X}_{*}$ be as in \eqref{pro-stefiel}.
We see from \eqref{eq-relation-problem} that
\begin{equation*}
\matr{X}_{*} = [u_{1}^{*},\cdots,u_{p}^{*}]\ \ \text{and}\ \
\|\tens{A}-\tens{C}^{*}\|^2 = \|\tens{A}\|^2 - \tilde{f}(\matr{X}_{*}).
\end{equation*}
In other words,
to solve \eqref{pro-ortho-approxi},
it is enough for us to solve \eqref{pro-stefiel},
which is an optimization problem on $\text{St}(p,n)$.\\
(ii) If $p=1$, then \eqref{pro-stefiel} is the \emph{cubic spherical optimization problem} \cite{qi2009z,zhang2012best,Zhang12:MC}.
If $p=n$, then \eqref{pro-stefiel} is the \emph{approximate orthogonal tensor diagonalization problem} \cite{Como94:sp,comon1994tensor,Como10:book,LUC2017globally}.
\end{remark}

Let $\matr{Q}\in\ON{n}$ and
$\tens{W} =  \tens{A}(\matr{Q})$.
 Another problem, equivalent to  \eqref{pro-stefiel}, is to find
\begin{equation}\label{pro-orthogonal}
\matr{Q}_{*} = \argmax_{\matr{Q}\in\ON{n}}f(\matr{Q}),
\end{equation}
where
\begin{equation}\label{eq-cost-func-2}
f(\matr{Q})\eqdef \sum\limits_{i=1}^{p}\tenselem{W}^2_{i\cdots i}.
\end{equation}
In fact,
if $\matr{X}\in \text{St}(p,n)$ and $\matr{Q}=[\matr{X},\matr{Y}]\in\ON{n}$,
then $\tenselem{W}_{i_1\cdots i_d}=\widetilde{\tenselem{W}}_{i_1\cdots i_d}$ for any $1\leq i_1,\cdots,i_d\leq p$.
The equivalence between \eqref{pro-stefiel} and \eqref{pro-orthogonal} follows from the fact that $f(\matr{Q}) = \tilde{f}(\matr{X})$.

\begin{remark}\rm
Let $\tens{W}\in \text{symm}(\RR^{n\times n\times n})$ and $1\leq p\leq n$.
Let $\widetilde{\tens{W}}=\tens{W}^{(1,2,\cdots,p)}$.  
Then the objective used in \cite[(3.1)]{IshtAV13:simax} is the sum of squares of all the elements in $\widetilde{\tens{W}}$,
while \eqref{eq-cost-func-2} is the sum of squares of the diagonal elements in $\widetilde{\tens{W}}$.
They are the same if $p=1$.
\end{remark}

\subsection{Riemannian gradient}

\begin{definition}\rm
Let $\tens{A} \in \text{symm}(\RR^{n\times\cdots\times n})$ and $1\leq i<j\leq n$.
Define
\begin{align*}
\sigma_{i,j}(\tens{A})\eqdef \tenselem{A}_{ii\ldots i}\tenselem{A}_{ji\ldots i},\quad
d_{i,j}(\tens{A})\eqdef \sigma_{i,j}(\tens{A})-\sigma_{j,i}(\tens{A}) = \tenselem{A}_{ii\ldots i}\tenselem{A}_{ji\ldots i}-\tenselem{A}_{ij\ldots j}\tenselem{A}_{jj\ldots j}.
\end{align*}
\end{definition}

\begin{theorem}\label{RiemanGrad-thm}\rm
The Riemannian gradient of
\eqref{eq-cost-func-2} at $\matr{Q}$ is
\begin{equation}\label{eq-Riemannian-gradient}
\ProjGrad{f}{\matr{Q}}= \matr{Q}\,\Lambda(\matr{Q}),
\end{equation}
where
\begin{align}\label{eq-gradient-On}
\Lambda(\matr{Q})\eqdef
d\cdot
\left[\begin{smallmatrix}
0 & -d_{1,2}(\tens{W}) &
\ldots & -d_{1,p}(\tens{W})
& -\sigma_{1,p+1}(\tens{W})
& \cdots & -\sigma_{1,n}(\tens{W})\\ \\
d_{1,2}(\tens{W}) & 0 &
\ldots & -d_{2,p}(\tens{W})
& -\sigma_{2,p+1}(\tens{W})
& \cdots & -\sigma_{2,n}(\tens{W})\\ \\
\ldots&\ldots&\ldots&\ldots&\cdots&\cdots&\cdots\\ \\
d_{1,p}(\tens{W}) &
d_{2,p}(\tens{W}) & \ldots & 0
& -\sigma_{p,p+1}(\tens{W})
& \cdots & -\sigma_{p,n}(\tens{W})\\ \\
\sigma_{1,p+1}(\tens{W}) & \sigma_{2,p+1}(\tens{W})
& \cdots &\sigma_{p,p+1}(\tens{W}) & 0 &\cdots & 0\\ \\
\ldots&\ldots&\ldots&\ldots&\ldots&\cdots & \ldots\\ \\
\sigma_{1,n}(\tens{W}) &
\sigma_{2,n}(\tens{W}) & \ldots & \sigma_{p,n}(\tens{W}) &0
&\cdots& 0
\end{smallmatrix}\right].
\end{align}
\end{theorem}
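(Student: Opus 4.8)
The plan is to obtain the Riemannian gradient as the orthogonal projection of the Euclidean gradient of $f$ onto the tangent space of $\ON{n}$, and then to identify the resulting matrix with $\matr{Q}\,\Lambda(\matr{Q})$. First I would recall that $T_{\matr{Q}}\ON{n}=\{\matr{Q}\matr{\Omega}:\matr{\Omega}^{\T}=-\matr{\Omega}\}$ and that, with respect to the Frobenius inner product, the orthogonal projection of any $\matr{Z}\in\RR^{n\times n}$ onto $T_{\matr{Q}}\ON{n}$ is $\matr{Q}\cdot\tfrac12(\matr{Q}^{\T}\matr{Z}-\matr{Z}^{\T}\matr{Q})$. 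Hence it suffices to compute the Euclidean gradient $\nabla f(\matr{Q})\in\RR^{n\times n}$ of \eqref{eq-cost-func-2} and to verify that $\tfrac12(\matr{Q}^{\T}\nabla f-(\nabla f)^{\T}\matr{Q})=\Lambda(\matr{Q})$, since then $\ProjGrad{f}{\matr{Q}}=\matr{Q}\,\Lambda(\matr{Q})$ as claimed.

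Next I would compute $\nabla f$ columnwise. Since $\tenselem{W}_{i\cdots i}$ depends only on the $i$-th column $q_i$ of $\matr{Q}$, the objective $f$ depends only on the first $p$ columns, so $\partial f/\partial Q_{ab}=0$ whenever $b>p$. For $b\le p$, I would use that $\tenselem{W}_{b\cdots b}=\tens{A}(q_b,\ldots,q_b)$ is a degree-$d$ homogeneous form; the full symmetry of $\tens{A}$ makes all $d$ factors contribute equally, so the gradient with respect to $q_b$ is $d\,\tens{A}(\cdot,q_b,\ldots,q_b)$, whose $a$-th component is $d\,\tenselem{V}_{ab\cdots b}$, where $\tens{V}\eqdef\tens{A}\contr{2}\matr{Q}^{\T}\cdots\contr{d}\matr{Q}^{\T}$ is $\tens{A}$ transformed along all but the first index. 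Consequently the $b$-th column of $\nabla f$ equals $2d\,\tenselem{W}_{b\cdots b}\,(\tenselem{V}_{ab\cdots b})_{a=1}^{n}$ for $b\le p$ and vanishes for $b>p$.

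Then I would left-multiply by $\matr{Q}^{\T}$ and apply the re-contraction identity $\sum_a Q_{ak}\tenselem{V}_{ab\cdots b}=\tenselem{W}_{kb\cdots b}$, which holds because contracting the surviving first index of $\tens{V}$ with $\matr{Q}^{\T}$ restores $\tens{W}$. This gives $(\matr{Q}^{\T}\nabla f)_{kb}=2d\,\tenselem{W}_{b\cdots b}\,\tenselem{W}_{kb\cdots b}=2d\,\sigma_{b,k}(\tens{W})$ for $b\le p$ and $0$ for $b>p$. Taking the skew-symmetric part reduces to a short case analysis on whether each of $k,b$ is $\le p$ or $>p$: for $k,b\le p$ one gets $\tfrac12(2d\,\sigma_{b,k}-2d\,\sigma_{k,b})=-d\,d_{k,b}(\tens{W})$, which fills the upper-left block with the $d_{i,j}$ pattern; for $k\le p<b$ one gets $-d\,\sigma_{k,b}(\tens{W})$ and for $b\le p<k$ one gets $d\,\sigma_{b,k}(\tens{W})$, which fill the two off-diagonal blocks; and for $k,b>p$ one gets $0$. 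Matching these entries against \eqref{eq-gradient-On} completes the identification $\tfrac12(\matr{Q}^{\T}\nabla f-(\nabla f)^{\T}\matr{Q})=\Lambda(\matr{Q})$.

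The step I expect to require the most care is the Euclidean gradient computation: getting the factor $d$ correct from the symmetry of $\tens{A}$, and keeping the contraction conventions consistent so that left-multiplication by $\matr{Q}^{\T}$ genuinely reproduces the diagonal-restricted entries $\tenselem{W}_{kb\cdots b}$ rather than entries of some other partially transformed tensor. Once the identity $(\matr{Q}^{\T}\nabla f)_{kb}=2d\,\sigma_{b,k}(\tens{W})$ is established, the remaining block bookkeeping needed to recover $\Lambda(\matr{Q})$ is routine.
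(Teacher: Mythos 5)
Your proposal is correct and follows essentially the same route as the paper: compute the Euclidean gradient entrywise as $\partial f/\partial Q_{i,j}=2d\,\tenselem{W}_{j\cdots j}\tenselem{V}_{ij\cdots j}$ with $\tens{V}=\tens{A}\contr{2}\matr{Q}^{\T}\cdots\contr{d}\matr{Q}^{\T}$, use $\tens{W}=\tens{V}\contr{1}\matr{Q}^{\T}$ to pass to entries of $\tens{W}$, and then apply the skew-symmetrization projection $\tfrac12\matr{Q}(\matr{Q}^{\T}\nabla f-\nabla f^{\T}\matr{Q})$ onto the tangent space of $\ON{n}$ (the paper cites Absil et al.\ (3.35) for exactly this). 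The block-by-block identification of the skew part with $\Lambda(\matr{Q})$, including the signs $-d\,d_{k,b}$ and $\pm d\,\sigma$, matches \eqref{eq-gradient-On}.
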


\begin{proof}
Note that
\begin{equation*}
f(\matr{Q}) = \sum\limits_{j=1}^p \tenselem{W}^2_{jj\ldots j}
=\sum\limits_{j=1}^p(\sum\limits_{i_1,i_2,\ldots,i_d}\tenselem{A}_{i_1,i_2,\ldots,i_d}Q_{i_1,j}Q_{i_2,j}\ldots Q_{i_d,j})^2.
\end{equation*}
Let $\tens{V} = \tens{A} \contr{2} \matr{Q}^{\T} \cdots \contr{d} \matr{Q}^{\T}$.
Fix $1\leq i\leq n$ and $1\leq j\leq p$.
Then
\begin{align*}
\frac{\partial f}{\partial {Q}_{i,j}}
=2d\tenselem{W}_{jj\ldots j} \tenselem{V}_{ij\ldots j}
\end{align*}
by methods similar to \cite[Section 4.1]{LUC2017globally}.
Note that $\tens{W} = \tens{V}\contr{1} \matr{Q}^{\T}$.
We get the Euclidean gradient of \eqref{eq-cost-func-2} at $\matr{Q}$ as follows:
\begin{align*}
\nabla f(\matr{Q})
=  2d
\matr{Q}
  \begin{bmatrix}
    \tenselem{W}_{11\ldots 1} & \tenselem{W}_{12\ldots 2} & \ldots & \tenselem{W}_{1p\ldots p}& 0 &\cdots&0\\
    \tenselem{W}_{21\ldots 1} & \tenselem{W}_{22\ldots 2} & \ldots & \tenselem{W}_{2p\ldots p}& 0 &\cdots&0\\
   \ldots&\ldots&\ldots&\ldots& \cdots &\cdots&\cdots\\ \\
   \tenselem{W}_{n1\ldots 1} & \tenselem{W}_{n2\ldots 2} & \ldots & \tenselem{W}_{np\ldots p}& 0 &\cdots&0
  \end{bmatrix}
   \begin{bmatrix}
    \tenselem{W}_{1\ldots 1} & \cdots & 0&\cdots &0\\
    \vdots&\ddots&\vdots&\cdots &0\\
    0 & \cdots & \tenselem{W}_{p\cdots p}&\cdots &0\\
    \vdots&\cdots&\cdots&\ddots &\vdots\\
    0&\cdots&0&\cdots &0
  \end{bmatrix}.
\end{align*}
By \cite[(3.35)]{Absil08:Optimization}, 
we get that 
\begin{equation}\label{eq-Riemannian-gradient}
\ProjGrad{f}{\matr{Q}}=
\frac{1}{2}\matr{Q}(\matr{Q}^{\T}\nabla f(\matr{Q})-\nabla f(\matr{Q})^{\T}\matr{Q})
=\matr{Q}\,\Lambda(\matr{Q}).
\end{equation}
Then the proof is complete. 
\end{proof}

\begin{remark}\rm
(i) If $p=1$,
we see that $\Lambda(\matr{Q})=0$ if and only if
\begin{equation*}
\tenselem{W}_{21\ldots 1} = \tenselem{W}_{31\ldots 1} = \cdots = \tenselem{W}_{n1\ldots 1} = 0,
\end{equation*}
which means that the first column of $\matr{Q}$ satisfies the condition in \cite[(2)]{qi2009z}.\\
(ii) The definition of $\Lambda(\matr{Q})$ in \eqref{eq-gradient-On} can be seen as an extension of \cite[(12)]{LUC2017globally}.
\end{remark}

\begin{theorem}\rm
The Riemannian gradient of \eqref{eq-cost-func-1} at $\matr{X}$ satisfies
\begin{align}\label{Remannian-gradient-stp-2}
\matr{X}^{\T}\ProjGrad{\tilde{f}}{\matr{X}} = d\cdot
\left[\begin{smallmatrix}
0 & -d_{1,2}(\widetilde{\tens{W}}) &
\ldots & -d_{1,p}(\widetilde{\tens{W}})\\ \\
d_{1,2}(\widetilde{\tens{W}}) & 0 &
\ldots & -d_{2,p}(\widetilde{\tens{W}})\\ \\
\ldots&\ldots&\ldots&\ldots\\ \\
d_{1,p}(\widetilde{\tens{W}}) &
d_{2,p}(\widetilde{\tens{W}}) & \ldots & 0
\end{smallmatrix}\right].
\end{align}
\end{theorem}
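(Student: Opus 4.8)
The plan is to mirror the computation in the proof of \cref{RiemanGrad-thm}, but with the rectangular Stiefel projector in place of the orthogonal-group one, and then to read off the $(k,l)$ entries in terms of the quantities $\sigma_{i,j}$ and $d_{i,j}$. The only genuinely new ingredient is that the tangent space of $\text{St}(p,n)$ at $\matr{X}$ is cut out by $\matr{X}^{\T}\matr{Z}$ being skew-symmetric, so the associated projector subtracts the symmetric part rather than acting as in the $\ON{n}$ case.

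First I would compute the Euclidean gradient of $\tilde{f}$ viewed as a function on $\RR^{n\times p}$. Writing $\widetilde{\tenselem{W}}_{kk\ldots k}=\sum_{i_1,\ldots,i_d}\tenselem{A}_{i_1\ldots i_d}X_{i_1,k}\cdots X_{i_d,k}$ and using the symmetry of $\tens{A}$ exactly as in the proof of \cref{RiemanGrad-thm}, I expect
$$\frac{\partial\tilde{f}}{\partial X_{i,k}}=2d\,\widetilde{\tenselem{W}}_{kk\ldots k}\,\widetilde{\tenselem{V}}_{ik\ldots k},\qquad\widetilde{\tens{V}}\eqdef\tens{A}\contr{2}\matr{X}^{\T}\cdots\contr{d}\matr{X}^{\T},$$
so that $\nabla\tilde{f}(\matr{X})=2d\,\widetilde{\matr{V}}\widetilde{\matr{D}}$, where $\widetilde{\matr{V}}\in\RR^{n\times p}$ has entries $\widetilde{\tenselem{V}}_{ik\ldots k}$ and $\widetilde{\matr{D}}=\diag{\widetilde{\tenselem{W}}_{1\ldots1},\ldots,\widetilde{\tenselem{W}}_{p\ldots p}}$ is the diagonal matrix of diagonal entries of $\widetilde{\tens{W}}$.

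Next, projecting onto the tangent space of $\text{St}(p,n)$ at $\matr{X}$ gives $\ProjGrad{\tilde{f}}{\matr{X}}=\nabla\tilde{f}(\matr{X})-\matr{X}\,\mathrm{sym}(\matr{X}^{\T}\nabla\tilde{f}(\matr{X}))$, where $\mathrm{sym}(\matr{M})=\tfrac12(\matr{M}+\matr{M}^{\T})$. Left-multiplying by $\matr{X}^{\T}$ and using $\matr{X}^{\T}\matr{X}=\matr{I}_p$ collapses this to the skew-symmetric part,
$$\matr{X}^{\T}\ProjGrad{\tilde{f}}{\matr{X}}=\tfrac12\bigl(\matr{X}^{\T}\nabla\tilde{f}(\matr{X})-\nabla\tilde{f}(\matr{X})^{\T}\matr{X}\bigr).$$
I would then identify the entries of $\matr{X}^{\T}\nabla\tilde{f}(\matr{X})$ via the key observation that $(\matr{X}^{\T}\widetilde{\matr{V}})_{kl}=\widetilde{\tenselem{W}}_{kl\ldots l}$, the mixed slice obtained by contracting the first index of $\tens{A}$ against column $k$ of $\matr{X}$ and the remaining indices against column $l$; multiplying by $\widetilde{\matr{D}}$ yields $(\matr{X}^{\T}\nabla\tilde{f})_{kl}=2d\,\widetilde{\tenselem{W}}_{kl\ldots l}\widetilde{\tenselem{W}}_{ll\ldots l}=2d\,\sigma_{l,k}(\widetilde{\tens{W}})$. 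Taking the skew-symmetric part then gives $(\matr{X}^{\T}\ProjGrad{\tilde{f}}{\matr{X}})_{kl}=d\bigl(\sigma_{l,k}(\widetilde{\tens{W}})-\sigma_{k,l}(\widetilde{\tens{W}})\bigr)=-d\,d_{k,l}(\widetilde{\tens{W}})$, which is precisely the claimed $p\times p$ skew-symmetric matrix.

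The main obstacle I anticipate is the index bookkeeping in this last step: correctly recognizing that $\matr{X}^{\T}\widetilde{\matr{V}}$ produces the mixed slices $\widetilde{\tenselem{W}}_{kl\ldots l}$, and then matching $\widetilde{\tenselem{W}}_{kl\ldots l}\widetilde{\tenselem{W}}_{ll\ldots l}$ against the definitions of $\sigma_{i,j}$ and $d_{i,j}$ with the correct orientation of indices so that the sign works out to $-d\,d_{k,l}(\widetilde{\tens{W}})$. Everything else is a routine adaptation of \cref{RiemanGrad-thm}, the only structural difference being the symmetric-part projector of the Stiefel manifold in place of the skew projector of $\ON{n}$; this explains why the statement is phrased for $\matr{X}^{\T}\ProjGrad{\tilde{f}}{\matr{X}}$ rather than for $\ProjGrad{\tilde{f}}{\matr{X}}$ itself, and why the result recovers the top-left $p\times p$ block of $\Lambda(\matr{Q})$ when $\matr{Q}=[\matr{X},\matr{Y}]\in\ON{n}$.
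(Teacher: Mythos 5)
Your proposal is correct and follows essentially the same route as the paper's proof: the same Euclidean gradient $\nabla\tilde{f}(\matr{X})=2d\,\widetilde{\matr{V}}\widetilde{\matr{D}}$, followed by the Stiefel tangent-space projection of \cite[(3.35)]{Absil08:Optimization} (your form $\xi-\matr{X}\,\mathrm{sym}(\matr{X}^{\T}\xi)$ is algebraically identical to the paper's $(\matr{I}_n-\matr{X}\matr{X}^{\T})\xi+\matr{X}\,\mathrm{skew}(\matr{X}^{\T}\xi)$), and left-multiplication by $\matr{X}^{\T}$ to kill the normal-component term and leave the skew part. Your explicit identification $(\matr{X}^{\T}\widetilde{\matr{V}})_{kl}=\widetilde{\tenselem{W}}_{kl\ldots l}$, giving entries $-d\,d_{k,l}(\widetilde{\tens{W}})$, is exactly the index bookkeeping the paper leaves implicit when it writes down the skew matrix.
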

\begin{proof}
The proof goes along the same lines as for \cref{RiemanGrad-thm}. Note that
\begin{equation*}
\tilde{f}(\matr{X}) = \sum\limits_{j=1}^p \widetilde{\tenselem{W}}^2_{jj\ldots j}
=\sum\limits_{j=1}^p(\sum\limits_{i_1,i_2,\ldots,i_d}\tenselem{A}_{i_1,i_2,\ldots,i_d}X_{i_1,j}X_{i_2,j}\ldots X_{i_d,j})^2.
\end{equation*}
Let $\widetilde{\tens{V}} = \tens{A} \contr{2} \matr{X}^{\T} \cdots \contr{d} \matr{X}^{\T}$.
Fix $1\leq i\leq n$ and $1\leq j\leq p$.
Then
\begin{align*}
\frac{\partial \tilde{f}}{\partial X_{i,j}}
=2d\widetilde{\tenselem{W}}_{jj\ldots j} \widetilde{\tenselem{V}}_{ij\ldots j}
\end{align*}
by the similar methods in \cite[Section 4.1]{LUC2017globally}.
Note that $\widetilde{\tens{W}} = \widetilde{\tens{V}}\contr{1} \matr{X}^{\T}$.
We get the Euclidean gradient of \eqref{eq-cost-func-2} at $\matr{X}$ as follows:
\begin{align*}
\nabla \tilde{f}(\matr{X})&=  2d
  \begin{bmatrix}
    \widetilde{\tenselem{V}}_{11\ldots 1} & \widetilde{\tenselem{V}}_{12\ldots 2} & \cdots & \widetilde{\tenselem{V}}_{1p\ldots p} \\
    \widetilde{\tenselem{V}}_{21\ldots 1} & \widetilde{\tenselem{V}}_{22\ldots 2} & \cdots & \widetilde{\tenselem{V}}_{2p\ldots p} \\
   \cdots&\cdots&\cdots&\cdots\\
   \widetilde{\tenselem{V}}_{n1\ldots 1} & \widetilde{\tenselem{V}}_{n2\ldots 2} & \cdots & \widetilde{\tenselem{V}}_{np\ldots p}
  \end{bmatrix}
    \begin{bmatrix}
    \widetilde{\tenselem{W}}_{1\ldots 1} & \cdots & 0\\
    \vdots & \ddots & \vdots\\
    0 & \cdots & \widetilde{\tenselem{W}}_{p\cdots p}
  \end{bmatrix}.
\end{align*}
It follows by \cite[(3.35)]{Absil08:Optimization} that
\begin{align}\label{Remannian-gradient-stp-1}
\ProjGrad{\tilde{f}}{\matr{X}}
= (\matr{I}_{n}-\matr{X}\matr{X}^{\T})\nabla \tilde{f}(\matr{X})
+ d\matr{X}\cdot
\left[\begin{smallmatrix}
0 & -d_{1,2}(\widetilde{\tens{W}}) &
\ldots & -d_{1,p}(\widetilde{\tens{W}})\\ \\
d_{1,2}(\widetilde{\tens{W}}) & 0 &
\ldots & -d_{2,p}(\widetilde{\tens{W}})\\ \\
\ldots&\ldots&\ldots&\ldots\\ \\
d_{1,p}(\widetilde{\tens{W}}) &
d_{2,p}(\widetilde{\tens{W}}) & \ldots & 0
\end{smallmatrix}\right],
\end{align}
and the proof is complete.
\end{proof}


\begin{proposition}\rm\label{pro-equiva-sationary}
Let $\tens{A} \in \text{symm}(\RR^{n\times\cdots\times n})$ and $1 \leq p \leq n$.
Let $\matr{X}_{*}\in \text{St}(p,n)$ and $\matr{Q}_{*} = [\matr{X}_{*},\matr{Y}_{*}]\in \ON{n}.$
Suppose that $\tilde{f}$ is as in \eqref{eq-cost-func-1} and $f$ is as in \eqref{eq-cost-func-2}.
Then
$$\ProjGrad{\tilde{f}}{\matr{X}_{*}}=0\ \Leftrightarrow \ \ProjGrad{f}{\matr{Q}_{*}}=0.$$
\end{proposition}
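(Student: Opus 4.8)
The plan is to reduce each of the two stationarity conditions to the vanishing of the scalar quantities $d_{i,j}$ and $\sigma_{i,j}$, and then to match these term by term. First I would observe that $\matr{Q}_{*}\in\ON{n}$ is invertible, so by \cref{RiemanGrad-thm} the condition $\ProjGrad{f}{\matr{Q}_{*}}=0$ is equivalent to $\Lambda(\matr{Q}_{*})=0$. Reading off the block structure of \eqref{eq-gradient-On} and using that the lower-right $(n-p)\times(n-p)$ block is identically zero, this is in turn equivalent to the two separate conditions
\begin{equation*}
d_{i,j}(\tens{W})=0\ \ (1\leq i<j\leq p)\qquad\text{and}\qquad \sigma_{i,j}(\tens{W})=0\ \ (1\leq i\leq p<j\leq n).
\end{equation*}

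Next I would analyse the Stiefel side through \eqref{Remannian-gradient-stp-1}. The key structural remark is that its two summands lie in orthogonal subspaces: the columns of the second summand $d\matr{X}_{*}\cdot(\ldots)$ lie in the range of $\matr{X}_{*}$, whereas $(\matr{I}_n-\matr{X}_{*}\matr{X}_{*}^{\T})\nabla\tilde{f}(\matr{X}_{*})$ has its columns in the orthogonal complement, since $\matr{X}_{*}^{\T}\matr{X}_{*}=\matr{I}_p$ forces $\matr{X}_{*}^{\T}(\matr{I}_n-\matr{X}_{*}\matr{X}_{*}^{\T})=0$. Hence $\ProjGrad{\tilde{f}}{\matr{X}_{*}}=0$ if and only if both summands vanish, that is, $d_{i,j}(\widetilde{\tens{W}})=0$ for $1\leq i<j\leq p$ (this is the antisymmetric block of \eqref{Remannian-gradient-stp-2}) together with $(\matr{I}_n-\matr{X}_{*}\matr{X}_{*}^{\T})\nabla\tilde{f}(\matr{X}_{*})=0$.

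It then remains to join the two equivalences. Because the first $p$ columns of $\matr{Q}_{*}$ are exactly $\matr{X}_{*}$, we have $\tenselem{W}_{i_1\cdots i_d}=\widetilde{\tenselem{W}}_{i_1\cdots i_d}$ whenever all indices lie in $\{1,\ldots,p\}$; this yields $d_{i,j}(\tens{W})=d_{i,j}(\widetilde{\tens{W}})$ for $i,j\leq p$ and identifies the two antisymmetric blocks. For the remaining part I would write $\matr{I}_n-\matr{X}_{*}\matr{X}_{*}^{\T}=\matr{Y}_{*}\matr{Y}_{*}^{\T}$ and, since $\matr{Y}_{*}$ has orthonormal columns, reduce the normal condition to $\matr{Y}_{*}^{\T}\nabla\tilde{f}(\matr{X}_{*})=0$. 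Evaluating this product from the explicit form of $\nabla\tilde{f}(\matr{X}_{*})$ and using that columns $p+1,\ldots,n$ of $\matr{Q}_{*}$ are precisely the columns of $\matr{Y}_{*}$, its $(j-p,i)$ entry should come out to $2d\,\tenselem{W}_{ii\cdots i}\tenselem{W}_{ji\cdots i}=2d\,\sigma_{i,j}(\tens{W})$, so that the normal condition is exactly $\sigma_{i,j}(\tens{W})=0$ for $1\leq i\leq p<j\leq n$. Chaining the equivalences then completes the argument.

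The step I expect to be the main obstacle is this last computation. One must carefully relate the auxiliary tensor $\widetilde{\tens{V}}=\tens{A}\contr{2}\matr{X}_{*}^{\T}\cdots\contr{d}\matr{X}_{*}^{\T}$ to the contraction defining $\tens{W}$, and verify that contracting the first index of $\widetilde{\tens{V}}$ against a column $j>p$ of $\matr{Q}_{*}$ reproduces $\tenselem{W}_{ji\cdots i}$. Once this matching of entries is secured, the rest is bookkeeping of the block structure in \eqref{eq-gradient-On} and of the orthogonal splitting in \eqref{Remannian-gradient-stp-1}.
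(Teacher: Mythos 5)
Your proposal is correct and follows essentially the same route as the paper's proof: both reduce $\ProjGrad{f}{\matr{Q}_{*}}=0$ to the vanishing of $d_{i,j}(\tens{W})$ and $\sigma_{i,j}(\tens{W})$ via \eqref{eq-gradient-On}, and handle the Stiefel side by splitting \eqref{Remannian-gradient-stp-1} into its tangential part (the antisymmetric block of \eqref{Remannian-gradient-stp-2}) and its normal part, rewritten through $\matr{I}_n-\matr{X}_{*}\matr{X}_{*}^{\T}=\matr{Y}_{*}\matr{Y}_{*}^{\T}$ so that $\matr{Y}_{*}^{\T}\nabla\tilde{f}(\matr{X}_{*})$ matches the $\sigma_{i,j}(\tens{W})$ entries. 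Your only real departure is organizational: you phrase the argument as one chain of equivalences via the orthogonal-subspace decomposition (and spell out the entry computation the paper leaves implicit), whereas the paper proves the two implications separately, but the underlying facts used are identical.
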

\begin{proof}
Let
$\widetilde{\tens{W}}_{*} =  \tens{A}(\matr{X}_{*})$
and
$\tens{W}_{*} =  \tens{A}(\matr{Q}_{*})$.\\
($\Rightarrow$).
By \eqref{Remannian-gradient-stp-2},
we see that
$d_{i,j}(\tens{W}_{*}) = d_{i,j}(\widetilde{\tens{W}}_{*}) = 0$
for any $1\leq i<j \leq p$.
It follows by \eqref{Remannian-gradient-stp-1} that
$$\matr{Y}_{*}\matr{Y}_{*}^{\T}\nabla \tilde{f}(\matr{X}_{*})
= (\matr{I}_{n}-\matr{X}_{*}\matr{X}_{*}^{\T})\nabla \tilde{f}(\matr{X}_{*}) = 0,$$
and thus
$$\matr{Y}_{*}^{\T}\nabla \tilde{f}(\matr{X}_{*})
= \matr{Y}_{*}^{\T}\matr{Y}_{*}\matr{Y}_{*}^{\T}\nabla \tilde{f}(\matr{X}_{*}) = 0.$$
Then
$\sigma_{i,j}(\tens{W}_{*}) = 0$
for any $1\leq i\leq p<j\leq n$,
and thus $\ProjGrad{f}{\matr{Q}_{*}}=0$ by \eqref{eq-gradient-On}.\\
($\Leftarrow$).
By \eqref{eq-gradient-On},
we see that
$d_{i,j}(\widetilde{\tens{W}}_{*}) = d_{i,j}(\tens{W}_{*}) = 0$
for any $1\leq i<j \leq p$.
Note that
$\sigma_{i,j}(\tens{W}_{*}) = 0$
for any $1\leq i\leq p< j\leq n$.
It follows that
$\matr{Y}_{*}^{\T}\nabla \tilde{f}(\matr{X}_{*}) = 0,$
and thus
$$(\matr{I}_{n}-\matr{X}_{*}\matr{X}_{*}^{\T})\nabla \tilde{f}(\matr{X}_{*})
= \matr{Y}_{*}\matr{Y}_{*}^{\T}\nabla \tilde{f}(\matr{X}_{*}) = 0.$$
Then
$\ProjGrad{\tilde{f}}{\matr{X}_{*}}=0$ by \eqref{Remannian-gradient-stp-1}.
The proof is complete. 
\end{proof}

\section{Jacobi low rank orthogonal approximation algorithm}\label{section-algorithm}
\subsection{Algorithm description}
Let $1\leq p\leq n$ and $\set{C}=\{(i,j), 1\leq i< j\leq n, i\leq p\}$. 
We divide $\set{C}$ to be two different subsets
\begin{align*}
\set{C}_{1} \eqdef \{(i,j),\ 1\leq i<j\leq p\}\ \ \text{and}\ \
\set{C}_{2} \eqdef \{(i,j),\ 1\leq i\leq p<j\leq n\}.
\end{align*}
Denote by $\Gmat{i}{j}{\theta}$ the \emph{Givens rotation} matrix,  
\begin{equation*}
\Gmat{i}{j}{\theta} = {\small
\bbordermatrix{
&  & & i& & j && \cr
&1 &       & & & &&\cr
 & &\ddots & & & & \matr{0} &\cr
 i && & \cos \theta & & -\sin \theta&& \cr
 & & & & \ddots & &&\cr
j & & & \sin \theta & & \cos \theta && \cr
&  & \matr{0} & & & & \ddots &\cr
& &       & & & &&1}, }
\end{equation*}
as defined \textit{e.g.}  in \cite[Section 2.2]{LUC2017globally}.
Now we formulate the {\it Jacobi low rank orthogonal approximation} (JLROA) algorithm for
problem \eqref{pro-orthogonal} as in \Cref{al-JLROA}.

\begin{algorithm}
\caption{JLROA algorithm}\label{al-JLROA}
\begin{algorithmic}[1]
\STATE{{\bf Input:} $\tens{A}\in\text{symm}(\RR^{n\times \cdots\times n})$, $1 \leq p \leq n$, a starting point $\matr{Q}_{0}$.}
\STATE{{\bf Output:} Sequence of iterations $\matr{Q}_{k}$.} 
\FOR{$k=1,2,\ldots$ until a stopping criterion is satisfied}
\STATE Choose the pair $(i_k,j_k)\in\set{C}$ according to a pair selection rule.
\STATE Solve $\theta_k^{*}$ that maximizes $h_k(\theta)\eqdef\textit{f}(\matr{Q}_{k-1}\Gmat{i_k}{j_k}{\theta})$.
\STATE Set $\matr{U}_k \eqdef \Gmat{i_k}{j_k}{\theta^{*}_k}$, and update $\matr{Q}_k = \matr{Q}_{k-1} \matr{U}_k$.
\ENDFOR
\end{algorithmic}
\end{algorithm}

\begin{remark}
In JLROA algorithm, one natural way of choosing the index pairs is the following cyclic ordering: 
\begin{equation}\label{partial-cyclic-1}
\begin{split}
&(1,2) \to (1,3) \to \cdots \to (1,n) \to \\
& (2,3) \to \cdots \to (2,n) \to \\
& \cdots  \to (p,p+1) \to \cdots \to (p,n) \to \\
&(1,2) \to (1,3) \to \cdots.
\end{split}
\end{equation}
In this cae, we call JLROA algorithm the \emph{JLROA-C} algorithm. 
\end{remark}

\subsection{Elementary rotations}

Let $\tens{W} = \tens{A}(\matr{Q}_{k-1})$ and $\tens{T}(\theta) = \tens{W}(\Gmat{i_k}{j_k}{\theta})$.
As in JLROA algorithm,
we define
\begin{align}\label{definition-h}
\textit{h}_k:\ [-\frac{\pi}{2},\frac{\pi}{2}]\longrightarrow \RR^+, 
\  \theta \longmapsto \textit{f}\ (\matr{Q}_{k-1}\Gmat{i_k}{j_k}{\theta})=\sum_{i=1}^{p}\tenselem{T}_{i\cdots i}^2(\theta)
\end{align}
where $f$ is as in \eqref{eq-cost-func-2}.
Note that $\Gmat{i_k}{j_k}{\theta}=\Gmat{i_k}{j_k}{\theta+2\pi}$ and
$\tenselem{T}_{i\cdots i}^2(\theta)=\tenselem{T}_{i\cdots i}^2(\theta+\pi)$
for any $\theta\in\RR$ and $1\leq i\leq p$.
We see that $\textit{h}_k$ defined on   the whole $\RR$ is  $\pi$-periodic.  
So it is sufficient to determine $\theta_k^{*}\in [-\pi/2, \pi/2]$ such that
$\textit{h}_k(\theta_k^{*})=\max\limits_{\theta} \textit{h}_k(\theta),$
and  choose $\theta_k^{*}$ with the smallest absolute value among all possible $\theta$. 

Denote by $\overline{\RR}=\RR\cup\{\pm\infty\}$.
Define
\begin{align*}
\tau_k:\ \overline{\RR}\longrightarrow \RR^+,
\ \ x \longmapsto \textit{h}_k(\arctan(x)).
\end{align*}
Let $x=\tan(\theta)\in\overline{\RR}$ and $x^{*}_{k}=\tan(\theta_k^{*})$.
Then
\begin{align*}
\tau_k(x) - \tau_k(0) = \textit{h}_k(\theta) - \textit{h}_k(0)
= \sum_{i=1}^{p}\tenselem{T}_{i\cdots i}^2 - \sum_{i=1}^{p}\tenselem{W}_{i\cdots i}^2.
\end{align*}

\begin{lemma}\label{lemma-derivative-h}\rm
Let $\textit{h}_k$ be as in \eqref{definition-h}.
Then
$\textit{h}_k^{'}(\theta)= -2\Lambda(\matr{Q}_{k-1}\Gmat{i_k}{j_k}{\theta})_{i_k,j_k}.$
\end{lemma}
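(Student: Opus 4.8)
The plan is to recognize $h_k$ as the restriction of the objective $f$ to the smooth curve $\theta \mapsto \matr{Q}(\theta) \eqdef \matr{Q}_{k-1}\Gmat{i_k}{j_k}{\theta}$ inside $\ON{n}$, and then to differentiate it by the chain rule, recycling the Riemannian gradient formula already established in \cref{RiemanGrad-thm}. First I would compute the velocity of this curve. Since a Givens rotation in the $(i_k,j_k)$-plane satisfies $\frac{d}{d\theta}\Gmat{i_k}{j_k}{\theta} = \Gmat{i_k}{j_k}{\theta}\,\matr{S}$, where $\matr{S}$ is the skew-symmetric matrix whose only nonzero entries are $S_{i_k,j_k}=-1$ and $S_{j_k,i_k}=1$, I obtain $\frac{d}{d\theta}\matr{Q}(\theta) = \matr{Q}(\theta)\matr{S}$. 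In particular this velocity lies in the tangent space $T_{\matr{Q}(\theta)}\ON{n}$, precisely because $\matr{S}$ is skew-symmetric.

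Next I would apply the chain rule in the form $h_k'(\theta) = \langle \nabla f(\matr{Q}(\theta)), \matr{Q}(\theta)\matr{S}\rangle$, with $\nabla f$ the Euclidean gradient. Because the velocity is already tangent to $\ON{n}$, the Euclidean gradient may be replaced by its tangential projection without changing the pairing, so $h_k'(\theta) = \langle \ProjGrad{f}{\matr{Q}(\theta)}, \matr{Q}(\theta)\matr{S}\rangle$. Substituting $\ProjGrad{f}{\matr{Q}(\theta)} = \matr{Q}(\theta)\Lambda(\matr{Q}(\theta))$ from \cref{RiemanGrad-thm} and using orthogonality $\matr{Q}(\theta)^{\T}\matr{Q}(\theta)=\matr{I}_n$ collapses the factor $\matr{Q}(\theta)$ out of the Frobenius inner product, giving $h_k'(\theta) = \trace{\Lambda(\matr{Q}(\theta))^{\T}\matr{S}}$.

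Finally I would evaluate this trace. Since $\matr{S}$ is supported on the two entries $(i_k,j_k)$ and $(j_k,i_k)$ and $\Lambda(\matr{Q}(\theta))$ is skew-symmetric by \eqref{eq-gradient-On}, the pairing $\trace{\Lambda(\matr{Q}(\theta))^{\T}\matr{S}}=\sum_{a,b}\Lambda(\matr{Q}(\theta))_{a,b}S_{a,b}$ reduces to $-\Lambda_{i_k,j_k}+\Lambda_{j_k,i_k}=-2\Lambda(\matr{Q}(\theta))_{i_k,j_k}$, which is exactly the claimed identity. The one point demanding care is the bookkeeping of signs: the sign convention in the derivative of $\Gmat{i_k}{j_k}{\theta}$, the skew-symmetry relation $\Lambda_{j_k,i_k}=-\Lambda_{i_k,j_k}$, and the orientation of the Frobenius pairing must all be tracked consistently for the factor $-2$ and the correct off-diagonal index to emerge. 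I expect the passage from the Euclidean to the Riemannian gradient to be routine once tangency is noted, so the final trace identity is the only genuinely delicate step, and it is the same Givens convention used to derive \eqref{eq-gradient-On} that guarantees the signs match.
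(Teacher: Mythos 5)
Your proof is correct and follows essentially the same route as the paper's: both apply the chain rule to pair the Riemannian gradient $\matr{Q}\,\Lambda(\matr{Q})$ from \cref{RiemanGrad-thm} with the curve velocity $\matr{Q}(\theta)\matr{S}$, cancel the orthogonal factor, and evaluate the resulting pairing against the skew generator $\matr{G}^{\T}\matr{G}'=\matr{S}$ to obtain $-2\Lambda_{i_k,j_k}$. The only difference is expository: the paper compresses the chain-rule and tangency justification into a citation of \cite[Lemma 5.7]{LUC2017globally}, whereas you spell those steps out explicitly.
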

\begin{proof}
We denote by $\matr{G}(\theta)=\Gmat{i_k}{j_k}{\theta}$ for convenience.
It follows from \eqref{eq-Riemannian-gradient} and the methods similar to \cite[Lemma 5.7]{LUC2017globally} that
\begin{align*}
\textit{h}_k^{'}(\theta)&=\langle\ProjGrad{\textit{f}}{\matr{Q}_{k-1}\matr{G}(\theta)}, \matr{Q}_{k-1}\matr{G}^{'}(\theta)\rangle
=\langle \matr{Q}_{k-1}\matr{G}(\theta)\Lambda(\matr{Q}_{k-1}\matr{G}(\theta)), \matr{Q}_{k-1}\matr{G}^{'}(\theta)\rangle\\
&=\langle\Lambda(\matr{Q}_{k-1}\matr{G}(\theta)), {\matr{G}(\theta)}^{\T}\matr{G}^{'}(\theta)\rangle
= -2\Lambda(\matr{Q}_{k-1}\matr{G}(\theta))_{i_k,j_k}.
\end{align*}
\end{proof}

We distinguish two cases:
\paragraph{$\mathbf{Case}$ $1$:} $(i_k,j_k)\in \set{C}_{1}$. In this case the restricted cost function in \eqref{definition-h} takes the form
\[
\textit{h}_k(\theta) = \tenselem{T}_{i_k\cdots i_k}^2(\theta) + \tenselem{T}_{j_k\cdots j_k}^2(\theta),
\]
i.e., the elementary updates are exactly the same as in the case of orthogonal diagonalization \cite{LUC2017globally}.
In particular, $h_k (\theta)$  also has a period $\pi/2$ by \cite[Section 4.3]{LUC2017globally}.
In other words,
we can choose $\theta^{*}_k\in [-\pi/4, \pi/4]$ to maximize $h_k (\theta)$.
Equivalently,
we can choose $x^{*}_{k}\in[-1,1]$ to maximize $\tau_k(x)$.

\paragraph{$\mathbf{Case}$ $2$:} $(i_k,j_k)\in \set{C}_{2}$. In this case, $\textit{h}_k(\theta)$ is simply 
\[
\textit{h}_k(\theta) = \tenselem{T}_{i_k\cdots i_k}^2(\theta),
\]
which, in general, has a period $\pi$, and is a trigonometric polynomial of degree $d$ in $2\pi$.
However, we can use the fact that
\[
(\tenselem{T}_{i_k\cdots i_k}^2(\theta))' = 2(\tenselem{T}'_{i_k\cdots i_k}(\theta))(\tenselem{T}_{i_k\cdots i_k}(\theta)),
\]
to find the update, which is given by finding roots of a $d$-th order real polynomial.
Note that this subproblem is related to rank-one approximation, and therefore the expressions can be found in \cite[Section 3.5]{Lathauwer00:rank-1approximation} and \cite[Section 3.2]{IshtAV13:simax}.

\subsection{Subproblems}

Let $\tens{A}\in\text{symm}(\RR^{n\times \cdots\times n})$ be of 3rd or 4th order.
In the following subsections (\cref{exam-3rd-order} and \cref{exam-4th-order}) we show the details of how to find $\theta_k^{*}$ in JLROA algorithm. 
The derivations in \cref{exam-3rd-order} and \cref{exam-4th-order} for $(i_k,j_k)\in \set{C}_{1}$ case 
were first formulated in \cite{comon1994tensor},
and can also be found in \cite[Section 6.2]{LUC2017globally}. 
We present them here for convenience.

\subsubsection{3rd order symmetric tensors}\label{exam-3rd-order} \paragraph{$\mathbf{Case}$ $1$:} $(i_k,j_k)\in \set{C}_{1}$.
Take $p\ge2$ and the pair $(1,2)$ (without loss of generality). 
Let
\begin{align*}
a &= 6(\tenselem{W}_{111}\tenselem{W}_{112}-\tenselem{W}_{122}\tenselem{W}_{222}),\\
b &= 6(\tenselem{W}_{111}^2+\tenselem{W}_{222}^2-3\tenselem{W}_{112}^2-3\tenselem{W}_{122}^2
-2\tenselem{W}_{111}\tenselem{W}_{122}-2\tenselem{W}_{112}\tenselem{W}_{222}).
\end{align*}
Then we have that
\begin{align}
\tau_k(x)-\tau_k(0)&=\frac{1}{(1+x^2)^2}(a(x-x^3)-\frac{b}{2}x^2),\label{eq-inc-3}\\
\tau_k^{'}(x) &= \frac{1}{(1+x^2)^3}(a(1-6x^2+x^4)-b(x-x^3)).\notag
\end{align}
Denote by $\xi = x - 1/x$.
Then $\tau_k^{'}(x)=0$ if and only if
\begin{align*}
\Omega(\xi) \eqdef a\xi^2+b\xi-4a = 0.
\end{align*}
Solve $\Omega(\xi) = 0$ for all the real roots $\xi_\ell$.
Then solve
$x^2-\xi_\ell x-1=0$
for all $\ell$ and take the best real root as $x_k^{*}$.
\paragraph{$\mathbf{Case}$ 2:} $(i_k,j_k)\in \set{C}_{2}$.
\jl Take $1\leq p<n$ and \fin the pair (1,$n$) (without loss of generality). 
It holds that 
\begin{align*}
\tenselem{T}_{111}(\theta) &= \tenselem{W}_{111} \cos^3 \theta + 3\tenselem{W}_{11n}\cos^2 \theta\sin \theta
+ 3\tenselem{W}_{1nn}\cos \theta\sin^2 \theta +  \tenselem{W}_{nnn}\sin^3 \theta \\
\\
&= \frac{1}{(1+x^2)^{\frac{3}{2}}} \left( \tenselem{W}_{111} + 3\tenselem{W}_{11n}x
+ 3\tenselem{W}_{1nn}x^2  +  \tenselem{W}_{nnn}x^3 \right),
\end{align*}
and hence 
\begin{align}
\tau_k(x)-\tau_k(0) &
= \tenselem{T}_{111}^{2}(\theta)-\tenselem{W}_{111}^2
=\frac{1}{(1+x^2)^3}[
(\tenselem{W}_{nnn}^2-\tenselem{W}_{111}^2)x^6+(6\tenselem{W}_{1nn}\tenselem{W}_{nnn})x^5\notag\\
&+(-3\tenselem{W}_{111}^2+9\tenselem{W}_{1nn}^2+6\tenselem{W}_{11n}\tenselem{W}_{nnn})x^4
+(18\tenselem{W}_{11n}\tenselem{W}_{1nn}+2\tenselem{W}_{111}\tenselem{W}_{nnn})x^3\notag\\
&+(-3\tenselem{W}_{111}^2+6\tenselem{W}_{1nn}\tenselem{W}_{111}+9\tenselem{W}_{11n}^2)x^2
+(6\tenselem{W}_{111}\tenselem{W}_{11n})x],\label{eq-increasement-order3-2}\\
\tau_k^{'}(x)
&= \frac{6\tenselem{T}_{111}(x)}{(1+x^2)^{5/2}}[-\tenselem{W}_{1nn}x^3+(\tenselem{W}_{nnn}-2\tenselem{W}_{11n})x^2
+(2\tenselem{W}_{1nn}-\tenselem{W}_{111})x+\tenselem{W}_{11n}].\notag
\end{align}
Then we solve
\begin{equation}\label{eq-order-3-station}
-\tenselem{W}_{1nn}x^3+(\tenselem{W}_{nnn}-2\tenselem{W}_{11n})x^2+(2\tenselem{W}_{1nn}-\tenselem{W}_{111})x+\tenselem{W}_{11n} = 0,
\end{equation}
and take $x^{*}_{k}$ to be the best point among these real roots and $\pm\infty$.

\begin{remark}\rm
\eqref{eq-order-3-station} is similar to equations in \cite[Section 3.5]{Lathauwer00:rank-1approximation},
which \pc were \fin for the best rank-1 approximation of a tensor in $\text{symm}(\RR^{2\times 2\times 2})$.
\end{remark}

\subsubsection{4th order symmetric tensors}\label{exam-4th-order}
\paragraph{$\mathbf{Case}$ 1:} $(i_k,j_k)\in \set{C}_{1}$.
Take $p\ge2$ and the pair $(1,2)$ (without loss of generality). 
It holds that
\begin{align*}
&\tau_k(x)-\tau_k(0)
= \tenselem{T}_{1111}^{2}+\tenselem{T}_{2222}^{2}-\tenselem{W}_{1111}^2-\tenselem{W}_{2222}^2\\
&=\frac{1}{(1+x^2)^4}((8\tenselem{W}_{1111}\tenselem{W}_{1112}-8\tenselem{W}_{1222}\tenselem{W}_{2222})(x-x^7)\\
&+(-4\tenselem{W}_{1111}^2 + 12\tenselem{W}_{1122}\tenselem{W}_{1111} + 16\tenselem{W}_{1112}^2 + 16\tenselem{W}_{1222}^2
- 4\tenselem{W}_{2222}^2 + 12\tenselem{W}_{1122}\tenselem{W}_{2222})(x^2+x^6)\\
&+(48\tenselem{W}_{1112}\tenselem{W}_{1122} + 8\tenselem{W}_{1111}\tenselem{W}_{1222}
- 48\tenselem{W}_{1122}\tenselem{W}_{1222} - 8\tenselem{W}_{1112}\tenselem{W}_{2222})(x^3-x^5)\\
&+(- 6\tenselem{W}_{1111}^2 + 4\tenselem{W}_{1111}\tenselem{W}_{2222} + 72\tenselem{W}_{1122}^2 - 6\tenselem{W}_{2222}^2 + 64\tenselem{W}_{1112}\tenselem{W}_{1222})x^4).
\end{align*}
Denote by
\begin{align*}
a &= 8(\tenselem{W}_{1111}\tenselem{W}_{1112}-\tenselem{W}_{1222}\tenselem{W}_{2222});\\
b &=8(\tenselem{W}_{1111}^2-3\tenselem{W}_{1122}\tenselem{W}_{1111}-4\tenselem{W}_{1112}^2
-4\tenselem{W}_{1222}^2+\tenselem{W}_{2222}^2-3\tenselem{W}_{1122}\tenselem{W}_{2222});\\
c &= 8(18\tenselem{W}_{1112}\tenselem{W}_{1122}-7\tenselem{W}_{1111}\tenselem{W}_{1112}+3\tenselem{W}_{1111}\tenselem{W}_{1222}\\
&-18\tenselem{W}_{1122}\tenselem{W}_{1222}-3\tenselem{W}_{1112}\tenselem{W}_{2222}+7\tenselem{W}_{1222}\tenselem{W}_{2222});\\
d &= 8(9\tenselem{W}_{1111}\tenselem{W}_{1122}-32\tenselem{W}_{1112}\tenselem{W}_{1222}-2\tenselem{W}_{1111}\tenselem{W}_{2222}\\
&+9\tenselem{W}_{1122}\tenselem{W}_{2222}+12\tenselem{W}_{1112}^2-36\tenselem{W}_{1122}^2+12\tenselem{W}_{1222}^2);\\
e &= 80(6\tenselem{W}_{1122}\tenselem{W}_{1222}-\tenselem{W}_{1111}\tenselem{W}_{1222}-6\tenselem{W}_{1112}\tenselem{W}_{1122}+\tenselem{W}_{1112}\tenselem{W}_{2222}).
\end{align*}
Then
\begin{align*}
&\tau_k'(x) = \frac{1}{(1+x^2)^5}[a(1+x^8)+b(x^7-x)+c(x^6+x^2)+d(x^5-x^3)+ex^4].
\end{align*}
Denote by $\xi = x - 1/x$.
It follows that $\tau_k^{'}(x)=0$ if and only if
\begin{align*}
\Omega(\xi) \eqdef a\xi^4+b\xi^3+(4a + c)\xi^2+(3b + d)\xi+2a+2c+e = 0.
\end{align*}
Solve $\Omega(\xi) = 0$ for all the real roots $\xi_\ell$.
Then solve
$x^2-\xi_\ell x-1=0$
for all $\ell$ and take the best real root as $x_k^{*}$.
\paragraph{$\mathbf{Case}$ 2:} $(i_k,j_k)\in \set{C}_{2}$.
\jl Take $1\leq p<n$ and the pair (1,$n$) \fin (without loss of generality). 
It holds that 
\begin{align*}
\tenselem{T}_{1111}^{2}(\theta) = \frac{1}{(1+x^2)^{2}} \left( \tenselem{W}_{1111} + 4\tenselem{W}_{111n}+6\tenselem{W}_{11nn}x^2
+ 4\tenselem{W}_{1nnn}x^3  +  \tenselem{W}_{nnnn}x^4 \right),
\end{align*}
and hence 
\begin{align*}
\tau_k(x)-\tau_k(0) &
=\frac{1}{(1+x^2)^4}[
(\tenselem{W}_{nnnn}^2-\tenselem{W}_{1111}^2)x^8
+ (8\tenselem{W}_{1nnn}\tenselem{W}_{nnnn})x^7\\
&+ (- 4\tenselem{W}_{1111}^2 + 16\tenselem{W}_{1nnn}^2 + 12\tenselem{W}_{11nn}\tenselem{W}_{nnnn})x^6
+ (48\tenselem{W}_{11nn}\tenselem{W}_{1nnn} + 8\tenselem{W}_{111n}\tenselem{W}_{nnnn})x^5\\
&+ (- 6\tenselem{W}_{1111}^2 + 2\tenselem{W}_{nnnn}\tenselem{W}_{1111} + 36\tenselem{W}_{11nn}^2 + 32\tenselem{W}_{111n}\tenselem{W}_{1nnn})x^4\\
&+ (48\tenselem{W}_{111n}\tenselem{W}_{11nn} + 8\tenselem{W}_{1111}\tenselem{W}_{1nnn})x^3\\
&+ (- 4\tenselem{W}_{1111}^2 + 12\tenselem{W}_{11nn}\tenselem{W}_{1111} + 16\tenselem{W}_{111n}^2)x^2
+ (8\tenselem{W}_{1111}\tenselem{W}_{111n})x],\\
\tau_k^{'}(x)
&= \frac{-8\tenselem{T}_{1111}}{(1+x^2)^3}
[\tenselem{W}_{1nnn}x^4 +(3\tenselem{W}_{11nn}-\tenselem{W}_{nnnn})x^3
+ (3\tenselem{W}_{111n}-3\tenselem{W}_{1nnn})x^2\\
&+ (\tenselem{W}_{1111}-3\tenselem{W}_{11nn})x-\tenselem{W}_{111n}].
\end{align*}
Then we solve
$$\tenselem{W}_{1nnn}x^4 +(3\tenselem{W}_{11nn}-\tenselem{W}_{nnnn})x^3
+ (3\tenselem{W}_{111n}-3\tenselem{W}_{1nnn})x^2
+ (\tenselem{W}_{1111}-3\tenselem{W}_{11nn})x-\tenselem{W}_{111n} = 0$$
and take $x^{*}_{k}$ to be the best point among these real roots and $\pm\infty$.

\section{JLROA-G algorithm and its convergence}\label{sect-Jacobi-G}

\subsection{JLROA-G algorithm}

Different from the cyclic ordering \eqref{partial-cyclic-1} in JLROA-C,
another pair selection rule of Jacobi-type algorithm based on the Riemannian gradient was proposed in \cite{IshtAV13:simax}.
In this sense,
the pair $(i_k,j_k)$ at each iteration is chosen such that
\begin{equation}\label{eq:pair_selection_gradient}
|h_{k}^{'}(0)| = 2|(\matr{Q}_{k-1}^{\intercal}\ProjGrad{f}{\matr{Q}_{k-1}})_{i_k,j_k}|  \ge \varepsilon \|\ProjGrad{f}{\matr{Q}_{k-1}}\|,
\end{equation}
where  $0<\varepsilon\leq2/n$ is fixed.
By \cite[Lemma 5.2]{IshtAV13:simax} and \cite[Lemma 3.1]{LUC2017globally},
we see that it is always possible to find such a pair if $f$ is differentiable.
In this case, we call it the \emph{JLROA-G} algorithm. 

\begin{algorithm}
\caption{JLROA-G algorithm}\label{alg:jacobi-G}
\begin{algorithmic}[1]
\STATE{{\bf Input:} $\tens{A}\in\text{symm}(\RR^{n\times \cdots\times n})$, $1\leq p\leq n$,
$0<\varepsilon\leq 2/n$, a starting point $\matr{Q}_{0}$.}
\STATE{{\bf Output:} Sequence of iterations $\{\matr{Q}_{k}\}_{k\ge1}$.} 
\FOR{$k=1,2,\ldots$ until a stopping criterion is satisfied}
\STATE Choose a pair $(i_k,j_k)$ satisfying the inequality \eqref{eq:pair_selection_gradient} at $\matr{Q}_{k-1}$.
\STATE Solve $\theta^{*}_{k}$ that maximizes $h_k(\theta)$ defined as in \eqref{definition-h}.
\STATE Set $\matr{U}_k \eqdef \Gmat{i_k}{j_k}{\theta^{*}_k}$, and update $\matr{Q}_k = \matr{Q}_{k-1} \matr{U}_k$.
\ENDFOR
\end{algorithmic}
\end{algorithm}

\begin{remark}\rm\label{remark-local-conv}
(i) By \cite[Theorem 5.4]{IshtAV13:simax} and \cite[Theorem 3.3]{LUC2017globally},
we see that every accumulation point of the iterations in JLROA-G is a stationary point of $f$.\\
(ii) Let $\tens{A}\in\text{symm}(\RR^{n\times n\times n})$ and $p$ = 1.
Then JLROA-G is the same with the Jacobi-type algorithm in \cite{IshtAV13:simax},
which was developed to find the best low multilinear rank approximation of symmetric tensors.
\end{remark}

In this section,
we mainly prove the following result for JLROA-G.
The proof is postponed to 
\cref{subsec-main-proof}.

\begin{theorem}\rm\label{theorem-main-covergence}
Let $\tens{A}\in\text{symm}(\RR^{n\times n\times n})$ with $n\geq 3$. 
Suppose that $p=2$ and $\matr{Q}_{\ast}$ is an accumulation point of JLROA-G satisfying
\begin{align}
&\tenselem{A}(\matr{Q}_{\ast})_{112}^2+\tenselem{A}(\matr{Q}_{\ast})_{122}^2\neq 0,\label{eq-condition-not-zero}\\
&\tenselem{A}(\matr{Q}_{\ast})_{333}\tenselem{A}(\matr{Q}_{\ast})_{444}\cdots\tenselem{A}(\matr{Q}_{\ast})_{nnn}\neq 0.\label{eq-condition-not-zero-2}
\end{align}
Then either $\matr{Q}_{\ast}$ is the unique limit point,
or there exist an infinite number of accumulation points.
\end{theorem}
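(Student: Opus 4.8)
The plan is to combine the monotonicity of the exact line search with a local \emph{no-jump} estimate for the maximizer, read off from the explicit formulas of \cref{exam-3rd-order}, and then to extract the dichotomy from a compactness argument. First I would record the standard facts. Since $f$ in \eqref{eq-cost-func-2} is a polynomial in the entries of $\matr{Q}$, it is real analytic on the compact group $\ON{n}$ and satisfies a \L{}ojasiewicz gradient inequality at $\matr{Q}_{\ast}$, i.e. $|f(\matr{Q})-f(\matr{Q}_{\ast})|^{1-\zeta}\le C\|\ProjGrad{f}{\matr{Q}}\|$ near $\matr{Q}_{\ast}$ for some $C>0$ and $\zeta\in(0,1/2]$. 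The exact maximization makes $\{f(\matr{Q}_k)\}$ nondecreasing, and since $\matr{Q}_{\ast}$ is an accumulation point, $f(\matr{Q}_k)\uparrow f(\matr{Q}_{\ast})$; in particular $f(\matr{Q}_k)\le f(\matr{Q}_{\ast})$ for all $k$. By \cref{remark-local-conv}(i), $\matr{Q}_{\ast}$ is stationary, and \eqref{eq:pair_selection_gradient} ties the chosen entry to the whole gradient via $|h_k'(0)|\ge\varepsilon\|\ProjGrad{f}{\matr{Q}_{k-1}}\|$. A first consequence I would draw is that $f(\matr{Q}_k)\le f(\matr{Q}_{\ast})$ forces, for every pair $(i,j)$ selected infinitely often near $\matr{Q}_{\ast}$, that $\theta=0$ is a global maximizer of $\theta\mapsto f(\matr{Q}_{\ast}\Gmat{i}{j}{\theta})$ (pass to the limit in $f(\matr{Q}_k)\ge f(\matr{Q}_{k-1}\Gmat{i}{j}{\theta})$).

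The heart of the proof is a local estimate: there are a ball $B(\matr{Q}_{\ast},\rho)$ and constants $\mu,\gamma>0$ such that $\|\matr{Q}_k-\matr{Q}_{k-1}\|\le\mu\|\ProjGrad{f}{\matr{Q}_{k-1}}\|$ and $f(\matr{Q}_k)-f(\matr{Q}_{k-1})\ge\gamma\|\matr{Q}_k-\matr{Q}_{k-1}\|\,\|\ProjGrad{f}{\matr{Q}_{k-1}}\|$ whenever $\matr{Q}_{k-1}\in B(\matr{Q}_{\ast},\rho)$. Here the two hypotheses enter through \cref{exam-3rd-order}. For a pair $(i,j)\in\mathcal{C}_2$ with $j\ge3$ the maximization only affects $\tenselem{T}_{i\cdots i}^2$, and the critical points solve the cubic \eqref{eq-order-3-station}; the nonvanishing of $\tenselem{A}(\matr{Q}_{\ast})_{jjj}$ in \eqref{eq-condition-not-zero-2} makes the root at $x=0$ simple, so by the implicit function theorem the maximizing root depends smoothly on $\matr{Q}_{k-1}$ and stays at $x=0$ when $\matr{Q}_{k-1}=\matr{Q}_{\ast}$, while the competitor at $x=\pm\infty$ (value $\tenselem{A}(\matr{Q}_{\ast})_{jjj}^2$) is strictly dominated by the global-maximizer property above. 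For $(1,2)\in\mathcal{C}_1$ stationarity gives $d_{1,2}(\tens{W})\to0$, so $\Omega(\xi)=a\xi^2+b\xi-4a$ degenerates; \eqref{eq-condition-not-zero} controls the coefficient $b$, keeping $x=0$ a strict local maximizer with $b$ bounded away from $0$ and dominating the competing roots $x=\pm1$. As these are open conditions they persist throughout $B(\matr{Q}_{\ast},\rho)$, and a Taylor expansion of $\tau_k$ at $x=0$ yields both inequalities.

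With the local estimate in hand I would prove the dichotomy by contradiction. Suppose $\matr{Q}_{\ast}$ is not the unique limit point and there are only finitely many accumulation points; then $\matr{Q}_{\ast}$ is isolated among them, so I can pick $0<\rho_1<\rho_2\le\rho$ with no accumulation point in the closed annulus $\{\rho_1\le\|\matr{Q}-\matr{Q}_{\ast}\|\le\rho_2\}$ and with $\rho_1(1+\mu L)<\rho_2$, where $L$ bounds the gradient's growth near $\matr{Q}_{\ast}$. Since $\matr{Q}_{\ast}$ is an accumulation point the iterates enter $B(\matr{Q}_{\ast},\rho_1)$ infinitely often; since the limit is not unique they leave $B(\matr{Q}_{\ast},\rho_2)$ infinitely often. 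But a single step from $B(\matr{Q}_{\ast},\rho_1)$ has length at most $\mu\|\ProjGrad{f}{\matr{Q}_{k-1}}\|\le\mu L\rho_1$, hence cannot jump across the annulus, so each excursion deposits an iterate in it, producing by compactness an accumulation point there, a contradiction. Thus finitely many accumulation points forces uniqueness, and the unique accumulation point on the compact $\ON{n}$ is then the limit; the \L{}ojasiewicz inequality and the ascent bound moreover give, with $r_k=f(\matr{Q}_{\ast})-f(\matr{Q}_k)$, the telescoping estimate $\|\matr{Q}_k-\matr{Q}_{k-1}\|\le\tfrac{C}{\zeta\gamma}\bigl(r_{k-1}^{\zeta}-r_k^{\zeta}\bigr)$, confirming summable increments.

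I expect the genuine difficulty to be the local estimate of the second paragraph: the maximizer of $h_k$ is in general a discontinuous function of $\matr{Q}_{k-1}$, so the whole scheme rests on showing that \eqref{eq-condition-not-zero} and \eqref{eq-condition-not-zero-2} force the maximizing root of the critical-point equation to be simple and to remain at $x=0$ on a \emph{fixed} neighborhood of $\matr{Q}_{\ast}$, ruling out the jump of the $(1,2)$-rotation toward $x=\pm1$ and of the $\mathcal{C}_2$-rotations toward $x=\pm\infty$. This algebraic root-tracking is exactly what restricts \cref{theorem-main-covergence} to third order and $p=2$, the regime in which the governing equations in \cref{exam-3rd-order} are of low degree and fully explicit.
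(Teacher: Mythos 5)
Your reduction of the dichotomy to a local ``no-jump'' step-size bound is where the proof breaks, and it breaks for the pair $(1,2)$. You claim that \eqref{eq-condition-not-zero} keeps the coefficient $b$ of \cref{exam-3rd-order}(i) bounded away from zero near $\matr{Q}_{\ast}$, so that $x=0$ stays a nondegenerate, dominant maximizer. The hypotheses do not give this: take $\tens{W}=\tens{A}(\matr{Q}_{\ast})$ with $\tenselem{W}_{111}=\tenselem{W}_{222}=3$, $\tenselem{W}_{112}=\tenselem{W}_{122}=1$, all $\tenselem{W}_{333},\dots,\tenselem{W}_{nnn}$ nonzero, and the remaining entries arranged so that $\Lambda(\matr{Q}_{\ast})=0$. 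Both \eqref{eq-condition-not-zero} and \eqref{eq-condition-not-zero-2} hold, $d_{1,2}(\tens{W})=0$ so $a=0$, and also $b=6(9+9-3-3-6-6)=0$; by \eqref{eq-inc-3} the objective is then \emph{constant} along the entire $(1,2)$-rotation. For iterates near such a $\matr{Q}_{\ast}$ the global maximizer $x_k^{\ast}$ of the perturbed $\tau_k$ may sit anywhere in $[-1,1]$, so no inequality $\|\matr{Q}_k-\matr{Q}_{k-1}\|\le\mu\|\ProjGrad{f}{\matr{Q}_{k-1}}\|$ can hold on any fixed ball. This degenerate configuration ($\gamma_{12}(\tens{W})=1/3$; the case $-1$ is analogous) is precisely the one isolated in \cref{lemma-extreme-state-02}, and it is compatible with both hypotheses of the theorem---which is exactly why the conclusion is a dichotomy rather than unconditional convergence, and why the paper never proves, nor needs, a small-step property for $\mathcal{C}_1$. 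Your $\mathcal{C}_2$ argument has a related gap: passing to the limit in $f(\matr{Q}_k)\ge f(\matr{Q}_{k-1}\Gmat{i}{j}{\theta})$ only makes $x=0$ a \emph{weak} global maximizer at $\matr{Q}_{\ast}$, so ``the competitor at $x=\pm\infty$ is strictly dominated'' is an assertion, not a deduction; ties, and hence jumps of the $(1,3)$-rotation, are excluded in the paper not by the implicit function theorem but structurally, in path (b) of Step~1 via \cref{lemma-3-dimension-p-2}, which shows such a jump would force $\tenselem{A}(\matr{Q}_{\ast})_{112}=\tenselem{A}(\matr{Q}_{\ast})_{122}=0$, contradicting \eqref{eq-condition-not-zero}.

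The paper's actual route avoids step-size control altogether. Assuming finitely many accumulation points, it (i) propagates \eqref{eq-condition-not-zero}--\eqref{eq-condition-not-zero-2} to all of them (\cref{lemma-extreme-state-02} for $(1,2)$-jumps, \cref{lemma-3-dimension-p-2} to exclude $\mathcal{C}_2$-jumps); (ii) deduces $x_k^{\ast}\to0$ \emph{only} for $\mathcal{C}_2$ pairs and obtains $|h_{k}(\theta_k^{*})-h_{k}(0)|\geq \kappa |h_{k}'(0)||\theta_k^{*}|$ there, via \cref{lemma-double-derivative} (this is where \eqref{eq-condition-not-zero-2} enters) and the Bernstein-type \cref{lemma-weak-inequality}; (iii) for the pair $(1,2)$ uses the exact identity \eqref{eq:lemma-G-inequality-3}, which yields the same ascent inequality \emph{whether or not} $\theta_k^{\ast}$ is small; and (iv) feeds $|f(\matr{Q}_{k})-f(\matr{Q}_{k-1})|\geq \sigma\|\ProjGrad{f}{\matr{Q}_{k-1}}\| \|\matr{Q}_{k}-\matr{Q}_{k-1}\|$ into the \L{}ojasiewicz-based \cref{theorem-convegence-general}, which requires no bound on $\|\matr{Q}_{k}-\matr{Q}_{k-1}\|$. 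Your concluding annulus/compactness argument is sound in itself, but it needs exactly the small-step property the hypotheses cannot deliver: once a single pair can rotate by a non-infinitesimal angle from points arbitrarily close to $\matr{Q}_{\ast}$, iterates can leap over any annulus and the contradiction evaporates. You correctly located the crux (tracking the maximizing root), but the repair must go through the ascent inequality plus \cref{theorem-convegence-general}, not through a step-size bound.
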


\subsection{Some lemmas}

\begin{lemma}\rm\label{lemma-double-derivative}
Let $\tens{W}\in\text{symm}(\RR^{2\times 2\times 2})$ and
$\tens{T}=\tens{W}(\Gmat{1}{2}{\arctan x})$
with $x\in\overline{\RR}$.
Define
$\tau:\ \overline{\RR}\rightarrow \RR^+$
sending $x$ to $\tenselem{T}_{111}^2.$
Suppose that $\tenselem{W}_{222}\neq0$ and $\tau(0)=\max\limits_{x\in\overline{\RR}} \tau(x).$
Then\\
\noindent (i) $\tenselem{W}_{111}\neq0$, $\tenselem{W}_{112}=0$,\\
\noindent (ii)  $\tenselem{W}_{111}(2\tenselem{W}_{122}-\tenselem{W}_{111})<0$.
\end{lemma}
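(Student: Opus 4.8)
The plan is to reduce the whole statement to a one–variable calculus problem at $x=0$, extracting (i) from the first–order optimality condition and (ii) from the second–order condition, with a third–order refinement needed for the strict inequality. First I would write $\tenselem{T}_{111}$ explicitly. Since $\Gmat{1}{2}{\theta}$ acts on a two–dimensional symmetric tensor and $\cos(\arctan x)=1/\sqrt{1+x^2}$, $\sin(\arctan x)=x/\sqrt{1+x^2}$, a direct multilinear expansion (as in the $d=3$ computation of \cref{exam-3rd-order}) gives
\begin{equation*}
\tenselem{T}_{111} = \frac{g(x)}{(1+x^2)^{3/2}}, \qquad g(x) \eqdef \tenselem{W}_{111}+3\tenselem{W}_{112}x+3\tenselem{W}_{122}x^2+\tenselem{W}_{222}x^3,
\end{equation*}
so that $\tau(x)=g(x)^2/(1+x^2)^3$ and $\tau(0)=\tenselem{W}_{111}^2$. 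Because $0$ is an interior maximizer over $\overline{\RR}$ of the smooth function $\tau$ (the denominator never vanishes), it is in particular a local maximizer, so I may use $\tau'(0)=0$ and the sign of the leading nonconstant Taylor coefficient of $\tau(x)-\tau(0)$.

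For part (i), differentiating yields $\tau'(0)=6\tenselem{W}_{111}\tenselem{W}_{112}$, whence $\tenselem{W}_{111}\tenselem{W}_{112}=0$. To exclude $\tenselem{W}_{111}=0$, I would use $\tau\ge 0$ together with $\tau(0)=\tenselem{W}_{111}^2$: if $\tenselem{W}_{111}=0$ then $\tau(0)=0$ is the maximum of a nonnegative function, forcing $\tau\equiv0$ and hence $g\equiv0$, which contradicts $\tenselem{W}_{222}\neq0$. Therefore $\tenselem{W}_{111}\neq0$, and the vanishing of $\tau'(0)$ then gives $\tenselem{W}_{112}=0$.

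For part (ii), with $\tenselem{W}_{112}=0$ in hand, expanding $\tau(x)-\tau(0)$ to second order produces the coefficient $3\tenselem{W}_{111}(2\tenselem{W}_{122}-\tenselem{W}_{111})$ on $x^2$; maximality at $0$ forces this to be nonpositive, i.e. $\tenselem{W}_{111}(2\tenselem{W}_{122}-\tenselem{W}_{111})\le 0$. The main obstacle is upgrading this to a strict inequality. I would argue by contradiction: since $\tenselem{W}_{111}\neq0$, the only equality case is $\tenselem{W}_{111}=2\tenselem{W}_{122}$. In that case the $x^2$ term drops out, and I would compute the next Taylor coefficient of $\tau(x)-\tau(0)$, which is (up to sign) $4\tenselem{W}_{122}\tenselem{W}_{222}$ on $x^3$. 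Since $\tenselem{W}_{222}\neq0$ and $\tenselem{W}_{122}=\tenselem{W}_{111}/2\neq0$, this odd–order term is nonzero and changes sign as $x$ crosses $0$, so $0$ cannot be a local maximum — contradicting the hypothesis. Hence the equality case is impossible and $\tenselem{W}_{111}(2\tenselem{W}_{122}-\tenselem{W}_{111})<0$, completing the proof. The only genuinely nonroutine point is this third–order sign analysis; the rest is direct differentiation.
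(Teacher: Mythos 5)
Your proof is correct, and while parts of it mirror the paper, the decisive step is handled by a genuinely different argument. For (i) and the non-strict version of (ii) you and the paper do essentially the same thing: first-order optimality at $x=0$ gives $\tenselem{W}_{111}\tenselem{W}_{112}=0$, and the second-order condition gives $\tenselem{W}_{111}(2\tenselem{W}_{122}-\tenselem{W}_{111})\leq 0$ (the paper phrases this via the structural formulas $\tau'(x)=6\tenselem{T}_{111}\tenselem{T}_{112}/(1+x^2)$ and the expression for $\tau''$, you via the rational form $\tau(x)=g(x)^2/(1+x^2)^3$; your coefficients $6\tenselem{W}_{111}\tenselem{W}_{112}$ and $3\tenselem{W}_{111}(2\tenselem{W}_{122}-\tenselem{W}_{111})$ are both right). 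Your exclusion of $\tenselem{W}_{111}=0$ (max value zero forces $g\equiv 0$, contradicting $\tenselem{W}_{222}\neq0$) also differs slightly from the paper, which instead compares $\tau(0)\geq\tau(\pm\infty)=\tenselem{W}_{222}^2>0$; both are fine. The real divergence is in upgrading (ii) to a strict inequality. The paper handles the borderline case $\tenselem{W}_{111}=2\tenselem{W}_{122}$ globally: after normalizing $\tenselem{W}_{111}=1$, $\tenselem{W}_{122}=1/2$, $\tenselem{W}_{222}=\beta\neq0$, it exhibits the explicit competitor $x=2\beta$ and verifies $\tau(2\beta)=(1+6\beta^2+8\beta^4)^2/(1+4\beta^2)^3>1=\tau(0)$. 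You instead argue locally: in the borderline case the expansion of $\tau(x)-\tau(0)$ begins with the cubic term $2\tenselem{W}_{111}\tenselem{W}_{222}x^3=4\tenselem{W}_{122}\tenselem{W}_{222}x^3$, whose coefficient is nonzero, so the difference changes sign across $x=0$ and $0$ cannot even be a local maximum. Your route is cleaner in that it needs no normalization, no guessing of a test point, and no polynomial inequality verification; the paper's route has the mild advantage of being a purely algebraic certificate (an explicit point where $\tau$ is larger) that requires no Taylor expansion beyond second order. Both are complete proofs.
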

\begin{proof}
(i) It is clear that $|\tenselem{W}_{222}|\leq|\tenselem{W}_{111}|$ since $\tau(0)\geq\tau(\pm\infty)$. Then $\tenselem{W}_{111}\neq0$.
Let $\theta = \arctan x$.
We have that
\[
\frac{d\tenselem{T}_{111}}{d\theta}=3\tenselem{T}_{112},\ \
\frac{d\tenselem{T}_{112}}{d\theta}=2\tenselem{T}_{122}-\tenselem{T}_{111}
\]
by straightforward differentiation \cite[Page 10]{LUC2017globally}.
It follows that
\begin{align}
\tau'(x)&=2\tenselem{T}_{111}\frac{d\tenselem{T}_{111}}{d\theta}\frac{d\theta}{dx}=\frac{6\tenselem{T}_{111}\tenselem{T}_{112}}{1+x^2},\label{eq-tau-derivative}\\
\tau''(x)
&=\frac{6}{(1+x^2)^2}(3\tenselem{T}_{112}^2+2\tenselem{T}_{111}\tenselem{T}_{122}
-\tenselem{T}_{111}^2-2\tenselem{T}_{111}\tenselem{T}_{112}x).\label{eq-tau-2-derivative}
\end{align}
Note that $\tau'(0)=0$.
We have $\tenselem{W}_{112}=0$ by \eqref{eq-tau-derivative}.\\
(ii) Note that $\tau''(0)\leq0$.
We have
$2\tenselem{W}_{111}\tenselem{W}_{122}-\tenselem{W}_{111}^2\leq0$
by \eqref{eq-tau-2-derivative}.
To complete the proof, we only need to prove that
$\tau(0)<\max\limits_{x\in\overline{\RR}} \tau(x)$ if
$\tenselem{W}_{111}=1$,\ $\tenselem{W}_{122}=1/2$ and $\tenselem{W}_{222}=\beta\neq0$ without loss of generality.
In fact, it can be verified that
$$\tau(x) = \frac{(1+\frac{3}{2}x^2+\beta x^3)^2}{(1+x^2)^3}$$
in this case, and
$$\max\limits_{x\in\overline{\RR}} \tau(x)\geq\tau(2\beta)=\frac{(1+6\beta^2+8\beta^4)^2}{(1+4\beta^2)^3}>\tau(0)=1.$$
\end{proof}

\begin{definition}\label{re-defi-index}\rm(\cite[Definition 3.11]{LUC2018})
Let $\tens{A}\in\text{symm}(\RR^{n\times n\times n})$ and $1\le i<j\le n$.
Suppose that $\tenselem{A}_{iii}\tenselem{A}_{iij}=\tenselem{A}_{ijj}\tenselem{A}_{jjj}$.
The \emph{stationary diagonal ratio},
denoted by $\gamma_{ij}(\tens{A})$,
is defined as follows.
\[
\gamma_{ij}(\tens{A}) \eqdef
\begin{cases}
0, &  \text{if}\ \tens{A}^{(i,j)}= \mathbf{0};\\
\infty, &    \text{if}\ \tenselem{A}_{iii}=\tenselem{A}_{jjj}=0\quad\text{and}\quad\tenselem{A}^2_{ijj} +\tenselem{A}^2_{iij}\neq0;\\
\end{cases}
\]
	otherwise, $\gamma_{ij}(\tens{A})$ is the {(unique)} number such that
\[
\begin{pmatrix}\tenselem{A}_{ijj} \\ \tenselem{A}_{iij} \end{pmatrix}  = \gamma_{ij}(\tens{A})\begin{pmatrix}\tenselem{A}_{iii}\\\tenselem{A}_{jjj}\end{pmatrix}.
\]
\end{definition}

\begin{lemma}\label{lemma-extreme-state-02}\rm
Let $\tens{W}\in\text{symm}(\RR^{2\times 2\times 2})$ and
$\tens{T}=\tens{W}(\Gmat{1}{2}{\arctan x})$
with $x\in\RR$ and $x\neq0$.
Suppose that $\|\diag{\tens{W}}\|=\|\diag{\tens{T}}\|\neq0$ and
$$\tenselem{W}_{111}\tenselem{W}_{112}=\tenselem{W}_{122}\tenselem{W}_{222},\ \ \tenselem{T}_{111}\tenselem{T}_{112}=\tenselem{T}_{122}\tenselem{T}_{222}.$$
Then $\gamma_{12}(\tens{W}) = \gamma_{12}(\tens{T}) = -1$ or $1/3$.
\end{lemma}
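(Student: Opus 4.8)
The plan is to reinterpret both hypotheses as statements about the single scalar function $D(\theta)\eqdef \tenselem{T}_{111}(\theta)^2+\tenselem{T}_{222}(\theta)^2$, where $\tens{T}=\tens{W}(\Gmat{1}{2}{\theta})$. Using the differentiation identities $d\tenselem{T}_{111}/d\theta=3\tenselem{T}_{112}$ and $d\tenselem{T}_{222}/d\theta=-3\tenselem{T}_{122}$ already recorded in \cref{lemma-double-derivative}, one gets $D'(\theta)=6(\tenselem{T}_{111}\tenselem{T}_{112}-\tenselem{T}_{122}\tenselem{T}_{222})$. Hence the two stationarity assumptions say precisely that $\theta=0$ and $\theta_0\eqdef\arctan x$ are critical points of $D$, while $\|\diag{\tens{W}}\|=\|\diag{\tens{T}}\|$ says $D(0)=D(\theta_0)$; note $\theta_0\in(-\pi/2,\pi/2)\setminus\{0\}$ since $x\in\RR$ and $x\neq0$. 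I would then write $\tenselem{T}_{111}(\theta)=P(\theta)$ and $\tenselem{T}_{222}(\theta)=P(\theta+\pi/2)$, where $P(\theta)=\tenselem{W}_{111}\cos^3\theta+3\tenselem{W}_{112}\cos^2\theta\sin\theta+3\tenselem{W}_{122}\cos\theta\sin^2\theta+\tenselem{W}_{222}\sin^3\theta$ is the cubic form of $\tens{W}$ along the rotated basis vectors; expanding by product-to-sum identities, $P$ is supported only on the first and third harmonics, $P(\theta)=a_1\cos\theta+b_1\sin\theta+a_3\cos3\theta+b_3\sin3\theta$, with $(a_1,b_1)\propto(\tenselem{W}_{111}+\tenselem{W}_{122},\tenselem{W}_{112}+\tenselem{W}_{222})$ and $(a_3,b_3)\propto(\tenselem{W}_{111}-3\tenselem{W}_{122},3\tenselem{W}_{112}-\tenselem{W}_{222})$.

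The key step is the identity $D(\theta)=P(\theta)^2+P(\theta+\pi/2)^2=C_0+R\cos(4\theta-\psi)$. Indeed, the first-harmonic squares contribute the constant $a_1^2+b_1^2$, the third-harmonic squares contribute $a_3^2+b_3^2$, and the surviving cross terms collapse onto a \emph{pure} fourth harmonic with $R^2\propto(a_1a_3-b_1b_3)^2+(a_1b_3+b_1a_3)^2$ (the candidate second- and sixth-harmonic contributions cancel). Thus $D$ is a single sinusoid in $4\theta$. If $R\neq0$ its critical points are spaced $\pi/4$ apart and consecutive critical values alternate between $C_0+R$ and $C_0-R$; therefore two critical points sharing the same $D$-value must be separated by a multiple of $\pi/2$. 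But $\theta_0-0=\theta_0$ lies in $(-\pi/2,\pi/2)\setminus\{0\}$, which contains no nonzero multiple of $\pi/2$ -- a contradiction. Hence $R=0$ and $D$ is constant.

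Finally, $R=0$ is equivalent to $(a_1+ib_1)(a_3+ib_3)=0$, so either the first or the third harmonic of $P$ vanishes. If $(a_1,b_1)=0$ then $\tenselem{W}_{122}=-\tenselem{W}_{111}$ and $\tenselem{W}_{112}=-\tenselem{W}_{222}$, giving $\gamma_{12}(\tens{W})=-1$; if $(a_3,b_3)=0$ then $\tenselem{W}_{122}=\tfrac13\tenselem{W}_{111}$ and $\tenselem{W}_{112}=\tfrac13\tenselem{W}_{222}$, giving $\gamma_{12}(\tens{W})=1/3$. The hypothesis $\|\diag{\tens{W}}\|\neq0$ forces $(\tenselem{W}_{111},\tenselem{W}_{222})\neq0$, which both excludes the degenerate branches of \cref{re-defi-index} (the zero-subtensor case and the $\infty$ case each require $\tenselem{W}_{111}=\tenselem{W}_{222}=0$) and guarantees $P\not\equiv0$, so the two harmonics cannot vanish at once; exactly one value occurs. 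Since the angular shift $\theta\mapsto\theta+\theta_0$ producing $\tens{T}$ preserves the first/third harmonic content of $P$, the cubic form of $\tens{T}$ has the same vanishing harmonic, whence $\gamma_{12}(\tens{T})=\gamma_{12}(\tens{W})\in\{-1,1/3\}$.

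I expect the main obstacle to be the harmonic-collapse computation establishing $D(\theta)=C_0+R\cos(4\theta-\psi)$ with no surviving second or sixth harmonics; everything afterwards -- the spacing argument forcing $R=0$, the reading-off of $\gamma_{12}$, and the extension from $\tens{W}$ to $\tens{T}$ -- is short. The only secondary care needed is the bookkeeping to exclude the degenerate cases in \cref{re-defi-index}, which is handled cleanly by $\|\diag{\tens{W}}\|\neq0$.
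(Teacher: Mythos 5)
Your proof is correct, but it takes a genuinely different route from the paper's. The paper works with $\tau(x)=\|\diag{\tens{T}}\|^2$ as a rational function of $x=\tan\theta$: by the increment formula \eqref{eq-inc-3}, the stationarity hypothesis on $\tens{W}$ kills the coefficient $a$, and then $\tau(x)=\tau(0)$ with $x\neq 0$ forces the remaining coefficient $b$ to vanish; substituting $\tenselem{W}_{122}=\gamma_{12}(\tens{W})\tenselem{W}_{111}$, $\tenselem{W}_{112}=\gamma_{12}(\tens{W})\tenselem{W}_{222}$ into $b=0$ yields $3\gamma^2+2\gamma-1=0$, i.e.\ $\gamma\in\{-1,1/3\}$, and the same computation applied to $\tens{W}=\tens{T}((\Gmat{1}{2}{\arctan x})^{\T})$ handles $\tens{T}$; equality of the two values is then obtained by a separate observation, namely that the rotation preserves both $\|\cdot\|$ and $\|\diag{\cdot}\|$, hence $|\gamma_{12}(\tens{W})|=|\gamma_{12}(\tens{T})|$, and $|-1|\neq|1/3|$. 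You instead decompose the restricted cubic form $P$ into its first and third harmonics and show that $D(\theta)=P(\theta)^2+P(\theta+\pi/2)^2$ is a constant plus a \emph{pure} fourth harmonic $2A_1A_3\cos(4\theta-\psi)$; your harmonic-collapse step checks out (the $2\theta$ and $6\theta$ contributions from the squares and cross terms do cancel between $P(\theta)^2$ and $P(\theta+\pi/2)^2$), the spacing argument then forces $A_1A_3=0$, and $\gamma_{12}\in\{-1,1/3\}$ is read off from which harmonic dies. Your route buys two things: a structural explanation of where the values $-1$ and $1/3$ come from (vanishing of the first, resp.\ third, harmonic of the cubic form), and a direct proof of the equality $\gamma_{12}(\tens{W})=\gamma_{12}(\tens{T})$ via rotation-invariance of the harmonic amplitudes, so you never need the paper's norm-preservation step. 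What the paper's route buys is economy: \eqref{eq-inc-3} is already available from \cref{exam-3rd-order}, so its whole proof is a few lines of algebra. One cosmetic point: the identity $d\tenselem{T}_{222}/d\theta=-3\tenselem{T}_{122}$ is not actually recorded in \cref{lemma-double-derivative} (only $d\tenselem{T}_{111}/d\theta=3\tenselem{T}_{112}$ and $d\tenselem{T}_{112}/d\theta=2\tenselem{T}_{122}-\tenselem{T}_{111}$ are); it follows by the same straightforward differentiation, so this is a citation slip, not a gap.
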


\begin{proof}
Note that $\|\diag{\tens{W}}\|=\|\diag{\tens{T}}\|$ and
$\|\tens{W}\| = \|\tens{T}\|$.
We see that
$|\gamma_{12}(\tens{W})| = |\gamma_{12}(\tens{T})|$.
Let $\tens{T} =\tens{W}(\Gmat{1}{2}{\arctan x})$.
Define
\begin{align*}
\tau:\ \RR \longrightarrow \RR^+, 
\  x \longmapsto \|\diag{\tens{T}}\|^2 = \tenselem{T}_{111}^2+\tenselem{T}_{222}^2.
\end{align*}
Then $\tau(x)=\tau(0)$ by the condition.
It follows by \eqref{eq-inc-3} that
\begin{equation}\label{eq-0-double-derivative}
\tenselem{W}_{111}^2+\tenselem{W}_{222}^2-3\tenselem{W}_{112}^2-3\tenselem{W}_{122}^2
-2\tenselem{W}_{111}\tenselem{W}_{122}-2\tenselem{W}_{112}\tenselem{W}_{222}=0.
\end{equation}
After the substitution of $\tenselem{W}_{122}=\gamma_{12}(\tens{W})\tenselem{W}_{111}$ and $\tenselem{W}_{112}=\gamma_{12}(\tens{W})\tenselem{W}_{222}$ to \eqref{eq-0-double-derivative},
we get that $\gamma_{12}(\tens{W})=-1$ or $1/3$.
Note that $\tens{W}=\tens{T}((\Gmat{1}{2}{\arctan x})^\intercal)$.
We can similarly get that $\gamma_{12}(\tens{T})=-1$ or $1/3$.
\end{proof}

\begin{lemma}\rm\label{lemma-3-dimension-p-2}
Let $\tens{W}\in\text{symm}(\RR^{3\times 3\times 3})$ and
$\tens{T}=\tens{W}(\Gmat{1}{3}{\arctan x})$
with $x\in\overline{\RR}$ and $x\neq0$.
Suppose that $|\tenselem{W}_{111}|=|\tenselem{T}_{111}|>0$
and
$$\tenselem{W}_{111}\tenselem{W}_{112}=\tenselem{W}_{122}\tenselem{W}_{222},\ \
\tenselem{T}_{111}\tenselem{T}_{112}=\tenselem{T}_{122}\tenselem{T}_{222},\ \
\tenselem{W}_{113}=\tenselem{W}_{223}=\tenselem{T}_{113}=\tenselem{T}_{223}=0.$$
Then $\tenselem{W}_{112}=\tenselem{W}_{122}=\tenselem{T}_{112}=\tenselem{T}_{122}=0.$
\end{lemma}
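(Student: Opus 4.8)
The plan is to exploit the fact that the Givens rotation $\matr{G}=\Gmat{1}{3}{\arctan x}$ acts only on the first and third coordinates while fixing the second. Write $c=\cos(\arctan x)$ and $s=\sin(\arctan x)$; since $x\neq 0$ we have $s\neq 0$. Because the second column of $\matr{G}$ is the standard basis vector $\vect{e}_2$, the map $\tens{W}\mapsto\tens{T}=\tens{W}(\matr{G})$ preserves, in every entry, the number of indices equal to $2$: forcing an index to be $2$ forces the corresponding summation index to be $2$ as well. Consequently the entries of $\tens{W}$ split into blocks according to how many $2$'s they contain, and these blocks transform independently under the rotation. The block with exactly two $2$'s, namely the pair $(\tenselem{W}_{122},\tenselem{W}_{223})$, transforms as a vector in the $(1,3)$-plane; this is the block I would target first.

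First I would write this block explicitly. Expanding $\tenselem{T}_{122}=\tens{W}(\vect{g}_1,\vect{e}_2,\vect{e}_2)$ and $\tenselem{T}_{223}=\tens{W}(\vect{e}_2,\vect{e}_2,\vect{g}_3)$, where $\vect{g}_1=(c,0,s)^{\T}$ and $\vect{g}_3=(-s,0,c)^{\T}$ are the first and third columns of $\matr{G}$, gives
\begin{align*}
\tenselem{T}_{122}&=c\,\tenselem{W}_{122}+s\,\tenselem{W}_{223}, & \tenselem{T}_{223}&=-s\,\tenselem{W}_{122}+c\,\tenselem{W}_{223}.
\end{align*}
Imposing the hypotheses $\tenselem{W}_{223}=\tenselem{T}_{223}=0$, the second identity reduces to $-s\,\tenselem{W}_{122}=0$, and since $s\neq 0$ this forces $\tenselem{W}_{122}=0$. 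Substituting $\tenselem{W}_{122}=\tenselem{W}_{223}=0$ into the first identity then yields $\tenselem{T}_{122}=0$. Note that the remaining hypotheses $\tenselem{W}_{113}=\tenselem{T}_{113}=0$ and $|\tenselem{W}_{111}|=|\tenselem{T}_{111}|$ constrain only the cubic ``no-$2$'' block $(\tenselem{W}_{111},\tenselem{W}_{113},\tenselem{W}_{133},\tenselem{W}_{333})$ and are not needed at this step.

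Next I would feed the two vanishing quantities into the stationarity-type relations. From $\tenselem{W}_{111}\tenselem{W}_{112}=\tenselem{W}_{122}\tenselem{W}_{222}$ together with $\tenselem{W}_{122}=0$ we get $\tenselem{W}_{111}\tenselem{W}_{112}=0$, and since $|\tenselem{W}_{111}|>0$ this forces $\tenselem{W}_{112}=0$. Identically, from $\tenselem{T}_{111}\tenselem{T}_{112}=\tenselem{T}_{122}\tenselem{T}_{222}$ with $\tenselem{T}_{122}=0$ and $|\tenselem{T}_{111}|=|\tenselem{W}_{111}|>0$ we obtain $\tenselem{T}_{112}=0$. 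This produces all four desired equalities $\tenselem{W}_{112}=\tenselem{W}_{122}=\tenselem{T}_{112}=\tenselem{T}_{122}=0$, completing the argument.

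The only genuine idea is the coordinate-decoupling observation of the first paragraph; after that the proof is purely linear. The one point requiring care is the sign convention for $\Gmat{1}{3}{\cdot}$, but the conclusion is convention-independent: whatever the signs, the relevant off-diagonal entry of $\matr{G}$ equals $\pm s$ with $s\neq 0$, so the relation $\tenselem{T}_{223}=0$ still forces $\tenselem{W}_{122}=0$. I therefore expect no real obstacle beyond the bookkeeping of verifying the two transformation identities and correctly tracking the symmetry $\tenselem{W}_{221}=\tenselem{W}_{122}$, $\tenselem{W}_{322}=\tenselem{W}_{223}$.
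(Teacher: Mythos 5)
Your proof is correct and follows essentially the same route as the paper: both exploit that $\Gmat{1}{3}{\arctan x}$ fixes the second coordinate, so $\tenselem{T}_{223}=-\tfrac{x}{\sqrt{1+x^2}}\tenselem{W}_{122}$ forces $\tenselem{W}_{122}=0$, after which the relations $\tenselem{W}_{111}\tenselem{W}_{112}=\tenselem{W}_{122}\tenselem{W}_{222}$ and $|\tenselem{W}_{111}|>0$ give $\tenselem{W}_{112}=0$, and symmetrically for $\tens{T}$. The only cosmetic difference is that you obtain $\tenselem{T}_{122}=0$ directly from the forward transformation identity, whereas the paper invokes the inverse rotation $\tens{W}=\tens{T}\bigl((\Gmat{1}{3}{\arctan x})^{\T}\bigr)$ and repeats the argument.
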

\begin{proof}
It can be verified that
$$-\frac{x}{\sqrt{1+x^2}}\tenselem{W}_{122} = \tenselem{T}_{223} = 0,$$
and thus $\tenselem{W}_{122}=0$.
It follows by the condition that $\tenselem{W}_{112}=0$.
Note that
$\tens{W}=\tens{T}((\Gmat{1}{3}{\arctan x})^{\intercal}).$
We can similarly get that $\tenselem{T}_{112}=\tenselem{T}_{122}=0$.
\end{proof}

\subsection{Proof of \cref{theorem-main-covergence}}\label{subsec-main-proof}

Before proving the main theorem,  we recall a result on global convergence from  \cite{LUC2017globally},
which is the direct consequence of \cite[Theorem 2.3]{SU15:pro}.
\begin{theorem}\rm\label{theorem-convegence-general}({\cite[Corollary 5.4]{LUC2017globally}})
Let $f$ be a real analytic function from $\ON{n}$ to $\RR$.
Suppose that $\{\matr{Q}_k\}_{k=1}^{\infty}\subset\ON{n}$ and, for large enough k,\\
(i) there exists $\sigma>0$ such that
\begin{equation*}\label{condition-coro-KL}
|{f}(\matr{Q}_{k})-{f}(\matr{Q}_{k-1})|\geq \sigma\|\ProjGrad{f}{\matr{Q}_{k-1}}\| \|\matr{Q}_{k}-\matr{Q}_{k-1}\|,
\end{equation*}
(ii) $\ProjGrad{f}{\matr{Q}_{k-1}}=0$ implies that $\matr{Q}_{k}=\matr{Q}_{k-1}$.\\
Then the iterations $\{\matr{Q}_k\}_{k=1}^{\infty}$ converge to a point $\matr{Q}_*\in\ON{n}$.
\end{theorem}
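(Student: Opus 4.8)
The plan is to establish this by the classical \L{}ojasiewicz gradient inequality argument, exploiting that $f$ is real analytic on the compact real-analytic manifold $\ON{n}$. The engine is the standard fact that for every $\matr{Q}_*\in\ON{n}$ there are constants $C>0$, $\zeta\in(0,1/2]$ and a neighborhood $U$ of $\matr{Q}_*$ with $|f(\matr{Q})-f(\matr{Q}_*)|^{1-\zeta}\leq C\,\|\ProjGrad{f}{\matr{Q}}\|$ for all $\matr{Q}\in U$. First I would dispose of the degenerate case: if $\ProjGrad{f}{\matr{Q}_{k-1}}=0$ for some large $k$, then (ii) forces $\matr{Q}_k=\matr{Q}_{k-1}$, the gradient vanishes again at $\matr{Q}_k$, and the sequence is eventually constant, hence trivially convergent. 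So I may assume $\ProjGrad{f}{\matr{Q}_{k-1}}\neq 0$ for all large $k$, legitimizing the divisions below. Next I record that $\{f(\matr{Q}_k)\}$ converges; this monotone convergence is the structural feature of the ascent/descent framework of \cite{SU15:pro} from which the statement is drawn (in the present application each iterate maximizes $f$ over a Givens rotation, so $f(\matr{Q}_k)\geq f(\matr{Q}_{k-1})$, and compactness of $\ON{n}$ bounds the sequence). Call its limit $f_*$; if $\matr{Q}_*$ is the accumulation point of the statement then $f(\matr{Q}_*)=f_*$ by continuity, and I fix the \L{}ojasiewicz data $(C,\zeta,U)$ at this $\matr{Q}_*$.

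The heart of the argument is a finite-length estimate. Set $r_k\eqdef f_*-f(\matr{Q}_k)\geq 0$, so $\{r_k\}$ decreases to $0$, and introduce the concave increasing function $\phi(s)\eqdef s^{\zeta}$, for which $\phi(r_{k-1})-\phi(r_k)\geq \phi'(r_{k-1})(r_{k-1}-r_k)=\zeta\,r_{k-1}^{\zeta-1}(r_{k-1}-r_k)$. While $\matr{Q}_{k-1}\in U$ I would chain three inequalities: condition (i) rewritten as $\|\matr{Q}_k-\matr{Q}_{k-1}\|\leq \tfrac{1}{\sigma\|\ProjGrad{f}{\matr{Q}_{k-1}}\|}(r_{k-1}-r_k)$; the \L{}ojasiewicz bound $\|\ProjGrad{f}{\matr{Q}_{k-1}}\|\geq \tfrac1C r_{k-1}^{1-\zeta}$; and the concavity estimate. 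They combine to $\|\matr{Q}_k-\matr{Q}_{k-1}\|\leq \tfrac{C}{\sigma\zeta}\bigl(\phi(r_{k-1})-\phi(r_k)\bigr)$, a per-step bound whose right-hand side telescopes. Summing from a suitable index $k_0$ onward bounds the total path length $\sum_k\|\matr{Q}_k-\matr{Q}_{k-1}\|$ by $\tfrac{C}{\sigma\zeta}\phi(r_{k_0})<\infty$, so $\{\matr{Q}_k\}$ is Cauchy in the Frobenius norm; since $\ON{n}$ is closed in $\RR^{n\times n}$ the limit lies in $\ON{n}$, and as $\matr{Q}_*$ is an accumulation point of a convergent sequence, the limit must be $\matr{Q}_*$.

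The step that will require genuine care — and the main obstacle — is justifying that the iterates truly stay in the \L{}ojasiewicz neighborhood $U$ so that the chained bound is valid for all large $k$; this is circular, since remaining near $\matr{Q}_*$ is essentially the conclusion. I would break the circularity by the usual trapping induction: shrink $U$ to a ball $B(\matr{Q}_*,\rho)$ carrying the inequality, and use that $\matr{Q}_*$ is an accumulation point together with $r_k\to 0$ to pick $k_0$ with $\|\matr{Q}_{k_0}-\matr{Q}_*\|+\tfrac{C}{\sigma\zeta}\phi(r_{k_0})<\rho$. Assuming inductively $\matr{Q}_{k_0},\dots,\matr{Q}_{k-1}\in B(\matr{Q}_*,\rho)$, the per-step bound applies through index $k$, so $\|\matr{Q}_k-\matr{Q}_*\|\leq \|\matr{Q}_{k_0}-\matr{Q}_*\|+\sum_{j=k_0+1}^{k}\|\matr{Q}_j-\matr{Q}_{j-1}\|\leq \|\matr{Q}_{k_0}-\matr{Q}_*\|+\tfrac{C}{\sigma\zeta}\phi(r_{k_0})<\rho$, keeping $\matr{Q}_k$ in the ball and closing the induction. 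Once every iterate past $k_0$ is trapped in $B(\matr{Q}_*,\rho)$, the finite-length conclusion of the previous paragraph applies verbatim; the borderline exponent $\zeta=1/2$ causes no difficulty, and the degenerate endpoint $r_k\equiv 0$ returns to the eventually-constant case already handled through (ii).
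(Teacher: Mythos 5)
Your argument is correct, but it is worth noting that the paper offers no proof of this statement at all: the theorem is quoted verbatim from \cite[Corollary 5.4]{LUC2017globally}, which the authors observe is a direct consequence of \cite[Theorem 2.3]{SU15:pro}. What you have written is a self-contained reconstruction of the proof behind that citation, and it is the standard one: the {\L}ojasiewicz gradient inequality for a real analytic $f$ on the compact analytic manifold $\ON{n}$, the per-step bound $\|\matr{Q}_k-\matr{Q}_{k-1}\|\leq \tfrac{C}{\sigma\zeta}\bigl(\phi(r_{k-1})-\phi(r_k)\bigr)$ obtained by chaining condition (i), the gradient lower bound, and concavity of $s^{\zeta}$, then the telescoping finite-length estimate and the trapping-ball induction that breaks the circularity of ``iterates stay in $U$.'' All steps check out, including the degenerate branches (vanishing gradient handled via (ii), and the endpoint $r_k\equiv 0$). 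The one substantive point, which you handled correctly, is that the theorem as stated is literally too weak: condition (i) only bounds $|f(\matr{Q}_k)-f(\matr{Q}_{k-1})|$, and without monotonicity of $\{f(\matr{Q}_k)\}$ the conclusion can fail --- a sequence alternating between two points with distinct $f$-values and nonvanishing gradients satisfies (i) for small enough $\sigma$ and (ii) vacuously, yet does not converge. Your patch, importing monotone ascent from the Jacobi framework (each iteration maximizes $f$ along a Givens rotation, so $f(\matr{Q}_k)\geq f(\matr{Q}_{k-1})$, and compactness gives convergence of the values), is precisely the hypothesis under which \cite{SU15:pro} and \cite{LUC2017globally} operate, so your proof matches the source's argument while making that implicit monotonicity hypothesis explicit --- a genuine improvement in precision over the bare citation.
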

Next, we provide a useful bound on the difference between cost functions at consecutive iterations.
\begin{lemma}\rm\label{lemma-weak-inequality}
Let $\tens{A}\in\text{symm}(\RR^{n\times n\times n})$.
Let $h_{k}(\theta)$ be as in \eqref{definition-h} for $k\in\NN$.
Then there exists $\delta>0$ such that
\begin{equation}\label{eq-lemma-weak-ineuqality}
h_{k}(\theta_k^{*})-h_{k}(0)\geq \delta |h_{k}'(0)|^2
\end{equation}
for any $k\in\NN$ with $(i_k,j_k)\in \set{C}_{2}$ in JLROA-G.
\end{lemma}

\begin{proof}
Let $\tens{W} = \tens{A}(\matr{Q}_{k-1})$ and $\tens{T} = \tens{W}(\Gmat{i_k}{j_k}{\theta})$.
Let $(i,j)=(i_k,j_k)$.
It is clear that $\tenselem{T}_{iii}(\theta)$ is a trigonometric polynomial with a finite degree $n_{0}$
for all the iterations in $\set{C}_{2}$.
By \cite[Theorem 1]{bell2015bernstein},
we see that
\begin{equation*}
\tenselem{T}_{iii}'(0)^2 \leq  n_{0}^2(\|\tenselem{T}_{iii}\|_{\infty}^2-\tenselem{T}_{iii}^2(0)) = n_{0}^2(h_{k}(\theta_k^{*})-h_{k}(0)),
\end{equation*}
when $\theta=0$.
Note that $h_{k}'(0)=2\tenselem{T}_{iii}(0)\tenselem{T}_{iii}'(0)$.
Let $M>0$ such that $|4 n_{0}^2\tenselem{T}_{iii}^2(0)|<M$ for all the iterations in $\set{C}_{2}$.
Then
\begin{equation*}
|h_{k}'(0)|^2 \leq  4 n_{0}^2\tenselem{T}^2_{iii}(0)(h_{k}(\theta_k^{*})-h_{k}(0))
<M(h_{k}(\theta_k^{*})-h_{k}(0)).
\end{equation*}
The proof is complete if we set $\delta=1/M$.
\end{proof}

\begin{remark}\rm
Let $\tens{A}\in\text{symm}(\RR^{n\times n\times n\times n})$ be of 4th order.
By the similar methods,
we can also prove \eqref{eq-lemma-weak-ineuqality}
for pairs in $\set{C}_{1}$,
or pairs in $\set{C}_{2}$.
\end{remark}

\begin{proof}[Proof of \cref{theorem-main-covergence}]
Assume that there exist a finite number of accumulation points,
denoted by $\matr{Q}^{(\ell)}(1\leq\ell\leq N)$.
Then any accumulation point is a stationary point by Remark \ref{remark-local-conv}(i).
In other words,
it holds that $\Lambda(\matr{Q}^{(\ell)})=0$ for all $1\leq\ell\leq N$ by \eqref{eq-Riemannian-gradient}.
Let $\matr{Q}_{\ast}=\matr{Q}^{(1)}$.
Now we prove that $\matr{Q}_{\ast}$ is the unique limit point.\\
\textbf{Step 1.}
We first prove that all the accumulation points satisfy \eqref{eq-condition-not-zero} and \eqref{eq-condition-not-zero-2} if $\matr{Q}_{\ast}$ satisfies them.
Note that the number of accumulation points is finite. 
We can see that any two different accumulation points can be connected by finite combination of the following two possible paths.\\
(a) Take the pair $(1,2)\in\set{C}_{1}$.
If $\{x_k^{\ast},(i_k,j_k)=(1,2)\}$ is finite or converges to 0,
this path doesn't appear and we skip it.
Otherwise,
this set has a nonzero accumulation point $\zeta$ and a subsequence converges to it.
We assume that
$$\{x_k^{\ast},(i_k,j_k)=(1,2)\}\rightarrow\zeta\neq0$$
without loss of generality.
Note that
$\{\matr{Q}_{k-1},(i_k,j_k)=(1,2)\}$ has an accumulation point.
We assume that
$$\{\matr{Q}_{k-1},(i_k,j_k)=(1,2)\}\rightarrow\matr{Q}^{(\ell_1)}$$
without loss of generality.
Then $\matr{Q}^{(\ell_2)}=\matr{Q}^{(\ell_1)}\matr{G}^{(1,2,\arctan\zeta)}$ is another different accumulation point.
It is clear that $\tenselem{A}(\matr{Q}^{(\ell_1)})_{iii}=\tenselem{A}(\matr{Q}^{(\ell_2)})_{iii}$ for $3\leq i\leq n$.
Note that $\tens{A}(\matr{Q}^{(\ell_1)})^{(1,2)}$ and $\tens{A}(\matr{Q}^{(\ell_2)})^{(1,2)}$
satisfy the conditions in Lemma \ref{lemma-extreme-state-02}.
We see that
$$\tenselem{A}(\matr{Q}^{(\ell_1)})_{112}^2+\tenselem{A}(\matr{Q}^{(\ell_1)})_{122}^2\neq0,\
\tenselem{A}(\matr{Q}^{(\ell_2)})_{112}^2+\tenselem{A}(\matr{Q}^{(\ell_2)})_{122}^2\neq0.$$
(b) Take the pair $(1,3)\in\set{C}_{2}$ for example.
Other pairs in $\set{C}_{2}$ are similar.
If $\{x_k^{\ast},(i_k,j_k)=(1,3)\}$ is finite or converges to 0,
this path doesn't appear and we skip it.
Otherwise,
this set has a nonzero accumulation point $\zeta$ and a subsequence converges to it.
We assume that
$$\{x_k^{\ast},(i_k,j_k)=(1,3)\}\rightarrow\zeta\neq0$$
without loss of generality.
Note that
$\{\matr{Q}_{k-1},(i_k,j_k)=(1,3)\}$ has an accumulation point.
We assume that
$$\{\matr{Q}_{k-1},(i_k,j_k)=(1,3)\}\rightarrow\matr{Q}^{(\ell_1)}$$
without loss of generality.
Then $\matr{Q}^{(\ell_2)}=\matr{Q}^{(\ell_1)}\matr{G}^{(1,3,\arctan\zeta)}$ is another different accumulation point.
Note that $\tens{A}(\matr{Q}^{(\ell_1)})^{(1,2,3)}$ and $\tens{A}(\matr{Q}^{(\ell_2)})^{(1,2,3)}$ satisfy the conditions in Lemma \ref{lemma-3-dimension-p-2}.
We see that
$$\tenselem{A}(\matr{Q}^{(\ell_1)})_{112}=\tenselem{A}(\matr{Q}^{(\ell_1)})_{122}=
\tenselem{A}(\matr{Q}^{(\ell_2)})_{112}=\tenselem{A}(\matr{Q}^{(\ell_2)})_{122}=0.$$

Since $\matr{Q}_{\ast}$ satisfies \eqref{eq-condition-not-zero},
we see that path (a) is the only possible path.
Then all the accumulation points satisfy \eqref{eq-condition-not-zero}.
Note that $\matr{Q}_{\ast}$ satisfies \eqref{eq-condition-not-zero-2} and $\tenselem{A}(\matr{Q}^{(\ell_1)})_{iii}=\tenselem{A}(\matr{Q}^{(\ell_2)})_{iii}$ for $3\leq i\leq n$ in path (a).
All the accumulation points satisfy \eqref{eq-condition-not-zero-2}.\\
\textbf{Step 2.}
Since path (b) in Step 1 doesn't appear, we get that
\begin{equation}\label{eq-proof-tends-0}
\{x_k^{\ast},(i_k,j_k)\in\set{C}_{2}\}\rightarrow 0
\end{equation}
in JLROA-G.
Let $\mathcal{N}(\matr{Q}_{\ast},\eta)$ be the neighborhood of
$\matr{Q}_{\ast}=\matr{Q}^{(1)}$ in $\ON{n}$ with radius $\eta>0$ such that
there exist no other accumulation points in this neighborhood.
If pair $(i,j)\in\set{C}_2$ satisfies that
\begin{equation}\label{eq-condition-infinite}
\{\matr{Q}_{k-1}\in\mathcal{N}(\matr{Q}_{\ast},\eta),(i_k,j_k)=(i,j)\}\
\text{is infinite},
\end{equation}
then $\tens{A}(\matr{Q}_{\ast})^{(i,j)}$ satisfies the conditions in Lemma \ref{lemma-double-derivative} by condition \eqref{eq-condition-not-zero-2}.
Then $\tenselem{A}(\matr{Q}_{\ast})_{iii}(\tenselem{A}(\matr{Q}_{\ast})_{iii}-2\tenselem{A}(\matr{Q}_{\ast})_{ijj})\neq0$.
Let
$$\rho_1\eqdef\min|\tenselem{A}(\matr{Q}_{\ast})_{iii}(\tenselem{A}(\matr{Q}_{\ast})_{iii}-2\tenselem{A}(\matr{Q}_{\ast})_{ijj}) |$$
for all pairs $(i,j)\in\set{C}_{2}$ satisfying \eqref{eq-condition-infinite}.
Then $\rho_1>0$.
For other accumulation points,
we can similarly get $\rho_\ell$ for $1<\ell\leq N$.
Then
\begin{equation}\label{eq-rho}
\rho\eqdef\min\rho_\ell>0.
\end{equation}
\textbf{Step 3.}
Now we show that there exists $\kappa>0$ such that
\begin{align}\label{eq-inequality-C2}
|h_{k}(\theta_k^{*})-h_{k}(0)|\geq \kappa |h_{k}'(0)||\theta_k^{*}|
\end{align}
for all $(i_k,j_k)\in\set{C}_{2}$.
Let $\tens{W} = \tens{A}(\matr{Q}_{k-1})$.
Denote $(i,j) = (i_{k},j_{k})$.
Note that $|x_k^{\ast}|<+\infty$ when $k$ is large enough by \eqref{eq-proof-tends-0}.
Then by \eqref{eq-order-3-station} and \eqref{eq-proof-tends-0}, 
we have that
\begin{align*}
\frac{h_{k}'(0)}{x_k^{\ast}}=
\frac{6\tenselem{W}_{iii}\tenselem{W}_{iij}}{x_k^{\ast}}
=-6\tenselem{W}_{iii}[(2\tenselem{W}_{ijj}-\tenselem{W}_{iii})+(\tenselem{W}_{jjj}-2\tenselem{W}_{iij})x_k^{\ast}-\tenselem{W}_{ijj}{x_k^{\ast}}^2]
\end{align*}
have accumulation points in the set
$$\{-6\tenselem{A}(\matr{Q}^{(\ell)})_{iii}(2\tenselem{A}(\matr{Q}^{(\ell)})_{ijj}
-\tenselem{A}(\matr{Q}^{(\ell)})_{iii}),\ \text{pair $(i,j)$ satisfies \eqref{eq-condition-infinite}}, \ 1\leq\ell\leq N\}$$
when $k\in\NN$ with $(i_{k},j_{k})\in\set{C}_2$.
It follows from \eqref{eq-rho} that there exists
$\upsilon>0$ such that $|h_{k}'(0)|\geq\upsilon|x_{k}^{\ast}|$ when $k$ is large enough with $(i_{k},j_{k})\in\set{C}_2$..
Then we get \eqref{eq-inequality-C2} by Lemma \ref{lemma-weak-inequality}.\\
\textbf{Step 4.}
If $\{x_k^{\ast},(i_k,j_k)=(1,2)\in\set{C}_1\}$ is finite,
we skip it.
Otherwise,
by \cite[(27)]{LUC2017globally},
we know that
\begin{equation}\label{eq:lemma-G-inequality-3}
|h_{k}(\theta_k^{*})-h_{k}(0)| = |\frac{x_k^{*}h^{'}_{k}(0)}{2(1-{x_k^{*}}^2)}| \geq \frac{1}{2}|h_{k}'(0)||\theta_k^{*}|
\end{equation}
for all $(i_k,j_k)\in\set{C}_{1}$.
Let $\omega=\min\{\kappa,1/2\}>0$.
By \eqref{eq-inequality-C2} and \eqref{eq:lemma-G-inequality-3},
we get that
\begin{equation*}
|h_{k}(\theta_k^{*})-h_{k}(0)| \geq \omega|h_{k}'(0)||\theta_{\ast}| \geq \frac{\sqrt{2}}{2}\omega\varepsilon\|\ProjGrad{{f}}{\matr{Q}_{k-1}}\|\|\matr{Q}_{k}-\matr{Q}_{k-1}\|,
\end{equation*}
for all $k\in\NN$.
Then $\matr{Q}_{\ast}$ is the unique limit point by \cref{theorem-convegence-general}.
\end{proof}

\section{JLROA-GP algorithm and its convergence}\label{sect-Jacobi-GS}
\subsection{JLROA-GP algorithm}\label{subsec:gp}

To prove better theoretical convergence results, in this section, we further propose a proximal variant of JLROA-G algorithm, which is called \emph{JLROA-GP} algorithm.
Let $\gamma(\theta):[-\pi, \pi]\rightarrow\RR^{+}$ be a $C^2$ function satisfying that $\eta_1 \theta^2\leq\gamma(\theta)\leq \eta_2 \theta^2$ for two positive constants $\eta_1,\eta_2>0$. 
This new variant is shown in \Cref{alg:jacobi-GS}. 

\begin{algorithm}
\caption{JLROA-GP algorithm}\label{alg:jacobi-GS}
\begin{algorithmic}[1]
\STATE{{\bf Input:} $\tens{A}\in\text{symm}(\RR^{n\times \cdots\times n})$, $1\leq p\leq n$,
$0<\varepsilon\leq 2/n$, $\delta>0$, 
a starting point $\matr{Q}_{0}$.}
\STATE{{\bf Output:} Sequence of iterations $\{\matr{Q}_{k}\}_{k\ge1}$.} 
\FOR{$k=1,2,\ldots$ until a stopping criterion is satisfied}
\STATE Choose a pair $(i_k,j_k)$ satisfying the inequality \eqref{eq:pair_selection_gradient} at $\matr{Q}_{k-1}$.
\STATE Solve $\theta^{*}_{k}$ that maximizes
\begin{equation}\label{eq:func_h_gp}
\tilde{h}_k(\theta) = h_k(\theta)-\delta\gamma(\theta),
\end{equation}
where $h_k(\theta)$ is defined as in \eqref{definition-h}.
\STATE Set $\matr{U}_k \eqdef \Gmat{i_k}{j_k}{\theta^{*}_k}$, and update $\matr{Q}_k = \matr{Q}_{k-1} \matr{U}_k$.
\ENDFOR
\end{algorithmic}
\end{algorithm}

\subsection{Convergence analysis}

In this subsection, we mainly prove the following convergence result for JLROA-GP algorithm.

\begin{theorem}\rm\label{theorem-main-covergence-GS}
In JLROA-GP algorithm, the iterates $\{\matr{Q}_k\}_{k\geq 1}$ converge to a stationary point $\matr{Q}_{*}$.
\end{theorem}

To prove \Cref{theorem-main-covergence-GS}, we first need to show two lemmas. 
In fact, the proofs of these two lemmas are included in the proof of \cite[Theorem 6.2]{LUC2017globally}. 
We present them here for convenience.

\begin{lemma}\label{lem:gp-conver-01}
In JLROA-GP algorithm, there exists $\sigma_1>0$ such that
\begin{equation}\label{eq:gp-weak-01}
	h_k(\theta^{*}_{k}) - h_k(0) \geq \sigma_1|\theta^{*}_{k}|^2.
\end{equation}
\end{lemma}

\begin{proof}
Since
\begin{equation*}
h_k(\theta^{*}_{k})-h_k(0)-\delta\gamma(\theta^{*}_{k}) =
\tilde{h}_k(\theta^{*}_{k}) - \tilde{h}_k(0) \geq 0,
\end{equation*}
we get that 
\begin{equation}
h_k(\theta^{*}_{k})-h_k(0)\geq\delta\gamma(\theta^{*}_{k}) \geq \delta\eta_1 |\theta^{*}_{k}|^2.
\end{equation}
The proof is complete by setting $\sigma_1=\delta\eta_1$.
\end{proof}

\begin{lemma}\label{lem:gp-conver-02}
In JLROA-GP algorithm, there exists $\sigma_2>0$ such that
\begin{equation}\label{eq:gp-weak-02}
|\theta^{*}_{k}|\geq \sigma_2|h_k'(0)|. 
\end{equation}
\end{lemma}
\begin{proof}
Define 
$$\bar{h}(\theta,\matr{Q})=f(\matr{Q}\Gmat{i}{j}{\theta})-\delta\gamma(\theta),$$
for $\theta\in[-\pi,\pi]$, $\matr{Q}\in\ON{n}$ and $1\leq i<j\leq n$.
Let 
$$M \eqdef \max_{\matr{Q}\in\ON{n}, \theta\in[-\pi,\pi],1\leq i<j\leq n}\left|\frac{\partial^2 \bar{h}}{\partial \theta^2}(\theta,\matr{Q})\right|.$$
Then $M<+\infty$ since $f$ and $\gamma$ are both $C^2$ and $\ON{n}$ is compact.
Therefore, we have that
$$|h_k'(0)|=|\tilde{h}_k'(0)|=|\tilde{h}_k'(\theta_k^{*})-\tilde{h}_k'(0)|\leq M|\theta_k^{*}|,$$
for any $\matr{Q}_{k-1}\in\ON{n}$, $\theta_k^{*}\in[-\pi,\pi]$ and $1\leq i_k<j_k\leq n$.
The proof is complete by setting $\sigma_2=1/M$.
\end{proof}


\begin{proof}[Proof of \cref{theorem-main-covergence-GS}]

Note that $\|\matr{Q}_k-\matr{Q}_{k-1}\|\leq \sqrt{2}|\theta^{*}_{k}|$ by \cite[Eq. (25)]{LUC2017globally}. By Lemma \ref{lem:gp-conver-01}, Lemma \ref{lem:gp-conver-02} and the inequality \eqref{eq:pair_selection_gradient}, we have that 
\begin{align*}
f(\matr{Q}_{k})-f(\matr{Q}_{k-1}) &= h_k(\theta^{*}_{k})-h_k(0) \geq \sigma_1|\theta^{*}_{k}|^2 \geq \sigma_1\sigma_2|\theta^{*}_{k}||h_k'(0)|\\
&\geq\frac{\varepsilon}{\sqrt{2}}\sigma_1\sigma_2\|\matr{Q}_k-\matr{Q}_{k-1}\| \|\ProjGrad{f}{\matr{Q}_{k-1}}\|.
\end{align*}
By \cref{theorem-convegence-general}, we see that there exists $\matr{Q}_{*}\in\ON{n}$ such that $\matr{Q}_{k}\rightarrow\matr{Q}_{*}$. 
Now we show that $\matr{Q}_{*}$ is a stationary point. 
In fact, since $h_k(\theta^{*}_{k}) - h_k(0)\rightarrow 0$, we have $\theta_k^{*}\rightarrow 0$ by \eqref{eq:gp-weak-01}, and thus $h_k'(0)\rightarrow 0$ by \eqref{eq:gp-weak-02}.
It follows by the inequality \eqref{eq:pair_selection_gradient} that $\|\ProjGrad{f}{\matr{Q}_{k-1}}\|\rightarrow 0$, and thus $\ProjGrad{f}{\matr{Q}_{*}}=0$. The proof is complete. 
\end{proof}

\begin{remark}
Compared with the Jacobi-PC algorithm in \cite[Sec. 6]{LUC2017globally}, in this paper, we choose the gradient based pair selection rule, and prove \cref{theorem-main-covergence-GS} for a general cost function $f$ and a general function $\gamma(\theta)$. 
\end{remark}
\begin{remark}
We note that the elementary update with the proximal term becomes more difficult. \fin
If $(i_k,j_k)\in\set{C}_1$, a choice of $\gamma(\theta)$ satisfying the conditions is $\gamma(\theta)=2\sin^2(\theta)\cos^2(\theta)$, which was used in \cite{LUC2017globally}. 
This choice of $\gamma$ has advantage that the function remains $\pi/2$-periodic, and therefore the complexity of finding the modified update is the same as the one without the proximal term. 
However, for the case $(i_k,j_k)\in\set{C}_2$ by adding a proximal term, we destroy the quadratic structure of $h_k(\theta)$, and we need to find roots of a polynomial of order $2d$ (instead of order $d$).
\end{remark}

\section{Numerical experiments}\label{sect-experiment}

In this section, we make some experiments to compare the performance of JLROA algorithm with the LROAT and SLROAT algorihtms in \cite{chen2009tensor}, and Trust region algorithm by \emph{Manopt Toolbox} in \cite{JMLR:v15:boumal14a}.
When $p=1$, LROAT and SLROAT are exactly the HOPM and SHOPM algorithms in \cite{Lathauwer00:rank-1approximation,kofidis2002best}, respectively.
We use the cyclic ordering of JLROA-C algorithm for simplicity except Example \ref{example-5}.
The LROAT and SLROAT algorihtms are both initialized via HOSVD \cite{Lathauwer00:TensorSVD}, because we find they generally have better performance in this case. 

\begin{example}\label{example-1}\rm
We randomly generate 1000 tensors in $\text{symm}(\RR^{10\times 10\times 10})$,
and run JLROA and SLROAT algorithms for them.
Denote by $\textsc{JVal}$ and $\textsc{SVal}$ the final value of
\eqref{eq-cost-func-1} obtained by JLROA and SLROAT, respectively.
Set the following notations.\\
(i) $\textsc{NumG}:$ the number of cases that $\textsc{JVal}$ is greater than $\textsc{SVal}$;\\
(ii) $\textsc{NumS}:$ the number of cases that $\textsc{JVal}$ is smaller than $\textsc{SVal}$;\\
(iii) $\textsc{NumE}:$ the number of cases that $\textsc{JVal}$ is equal\footnote{the difference is smaller than 0.0001.} to $\textsc{SVal}$;\\
(iv) $\textsc{RatioG}:$ the average of $\textsc{JVal}/\textsc{SVal}$ when $\textsc{JVal}$ is greater than $\textsc{SVal}$;\\
(v) $\textsc{RatioS}:$ the average of $\textsc{JVal}/\textsc{SVal}$ when $\textsc{JVal}$ is smaller than $\textsc{SVal}$.\\
The results are shown in \cref{table-example-1} and \cref{figure-example-1}.
It can be seen that JLROA algorithm has better performance when $p>2$.
They always get the same result when $p=1$.

\begin{table}[h!]
  \centering
  \caption{}
  \label{table-example-1}
  \scalebox{0.9}{
  \begin{tabular}{l c c c c c}
  \toprule
       & $\textsc{NumG}$ & $\textsc{NumS}$ & $\textsc{NumE}$ & $\textsc{RatioG}$ & $\textsc{RatioS}$\\
  \midrule
  $p=1$       &  0 & 0  &  1000 & ---  & --- \\
   \midrule
  $p=2$       & 328 & 441 & 231 & 1.0023 & 0.9982\\
    \midrule
  $p=5$       & 747 & 246 & 7 & 1.0042 & 0.9985\\
    \midrule
  $p=8$       & 900 & 99 & 1 & 1.0044 & 0.9992 \\
   \midrule
  $p=10$       & 815  & 180 & 5  & 1.0039 & 0.9996 \\
  \bottomrule
  \end{tabular}}
\end{table}
\end{example}

\begin{figure}[tbhp]
\centering
\subfloat[p=2]{\includegraphics[width=0.5\textwidth]{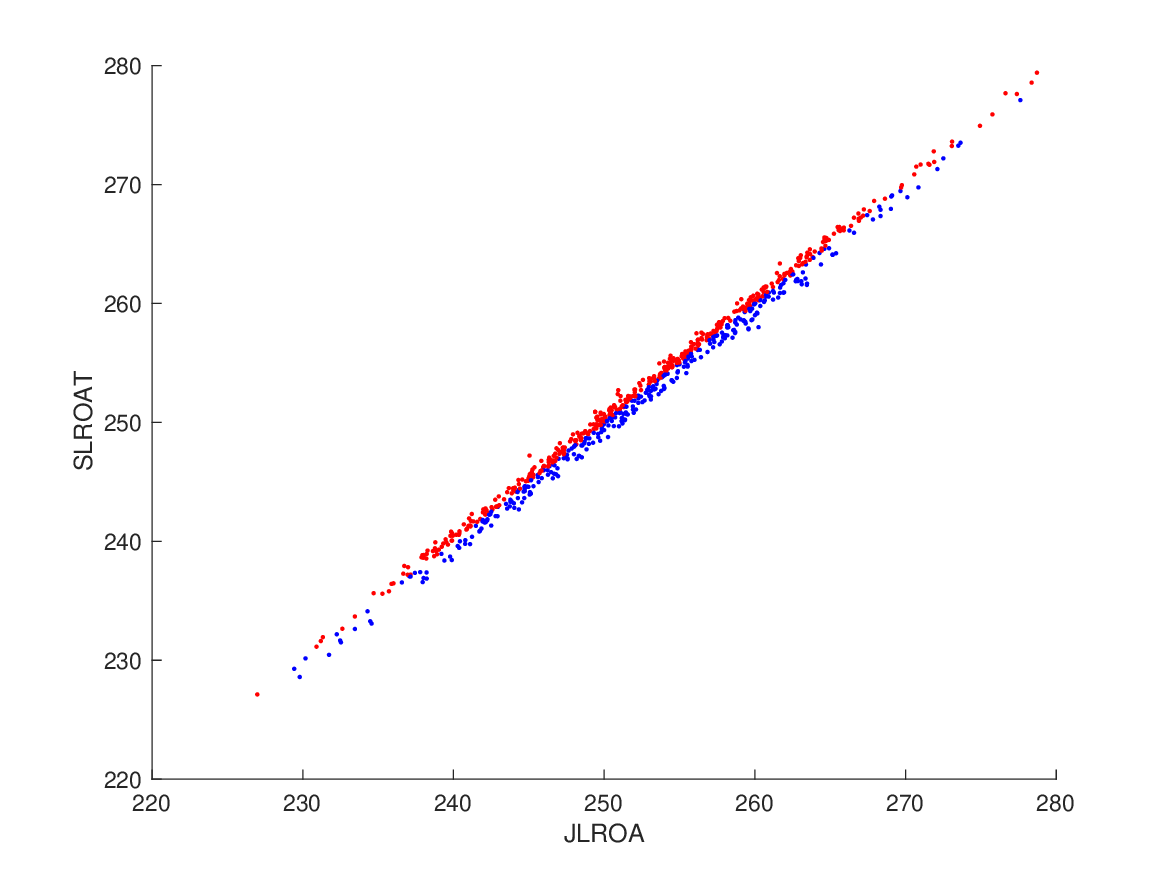}}\!\!\!
\subfloat[p=5]{\includegraphics[width=0.5\textwidth]{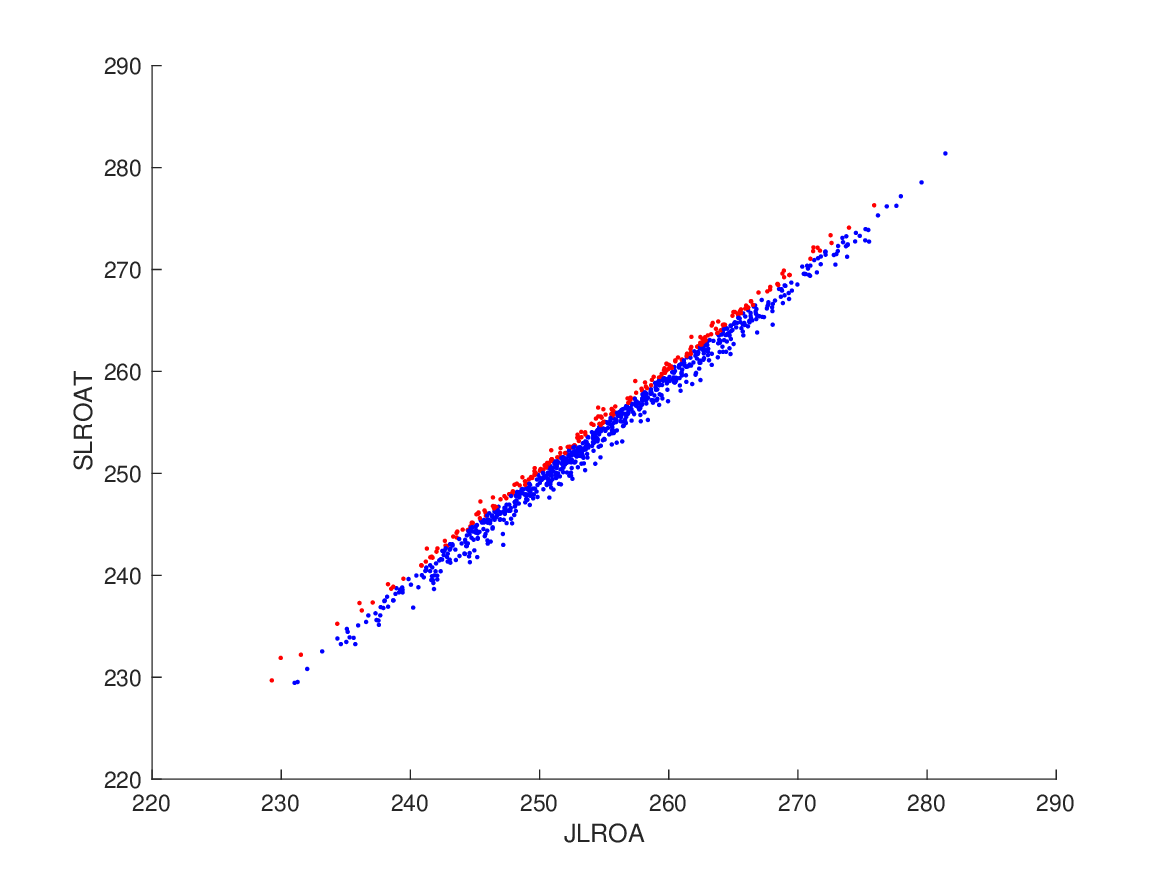}}\!\!\!
\subfloat[p=8]{\includegraphics[width=0.5\textwidth]{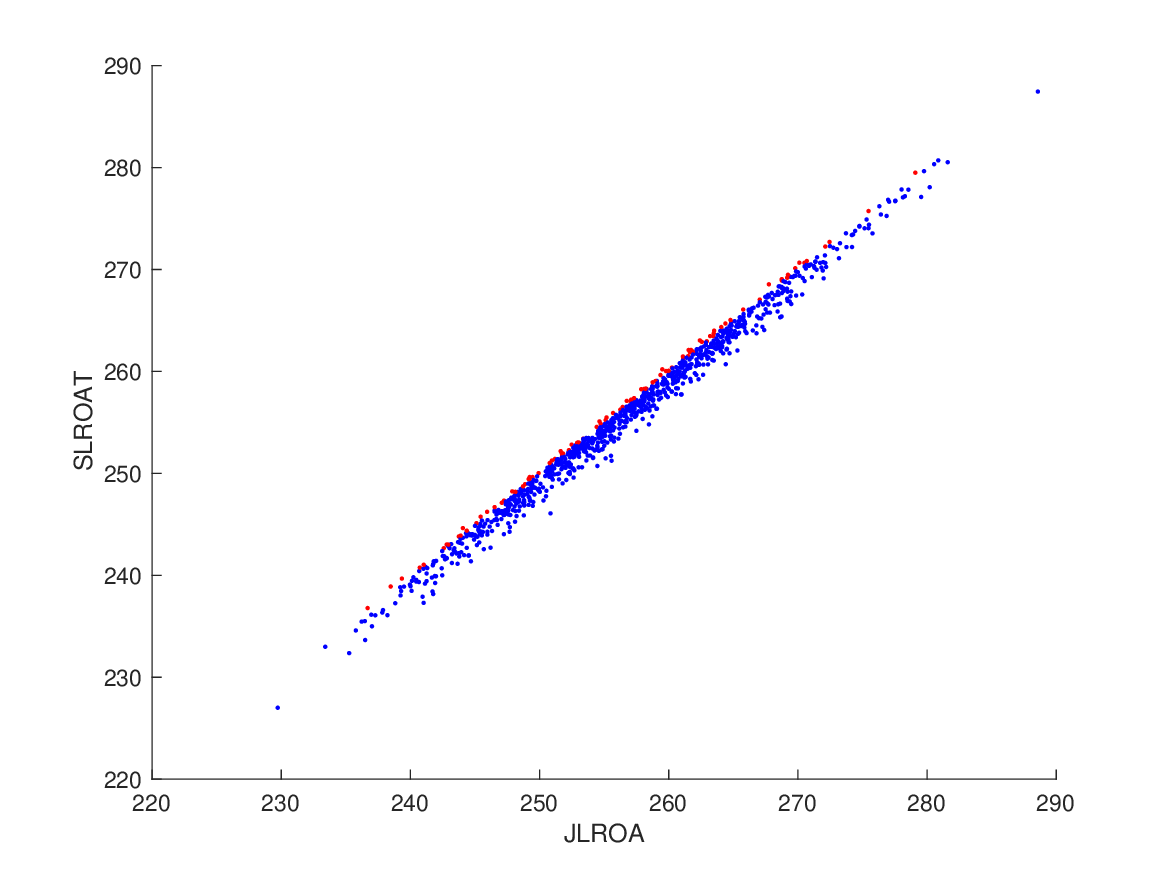}}\!\!\!
\subfloat[p=10]{\includegraphics[width=0.5\textwidth]{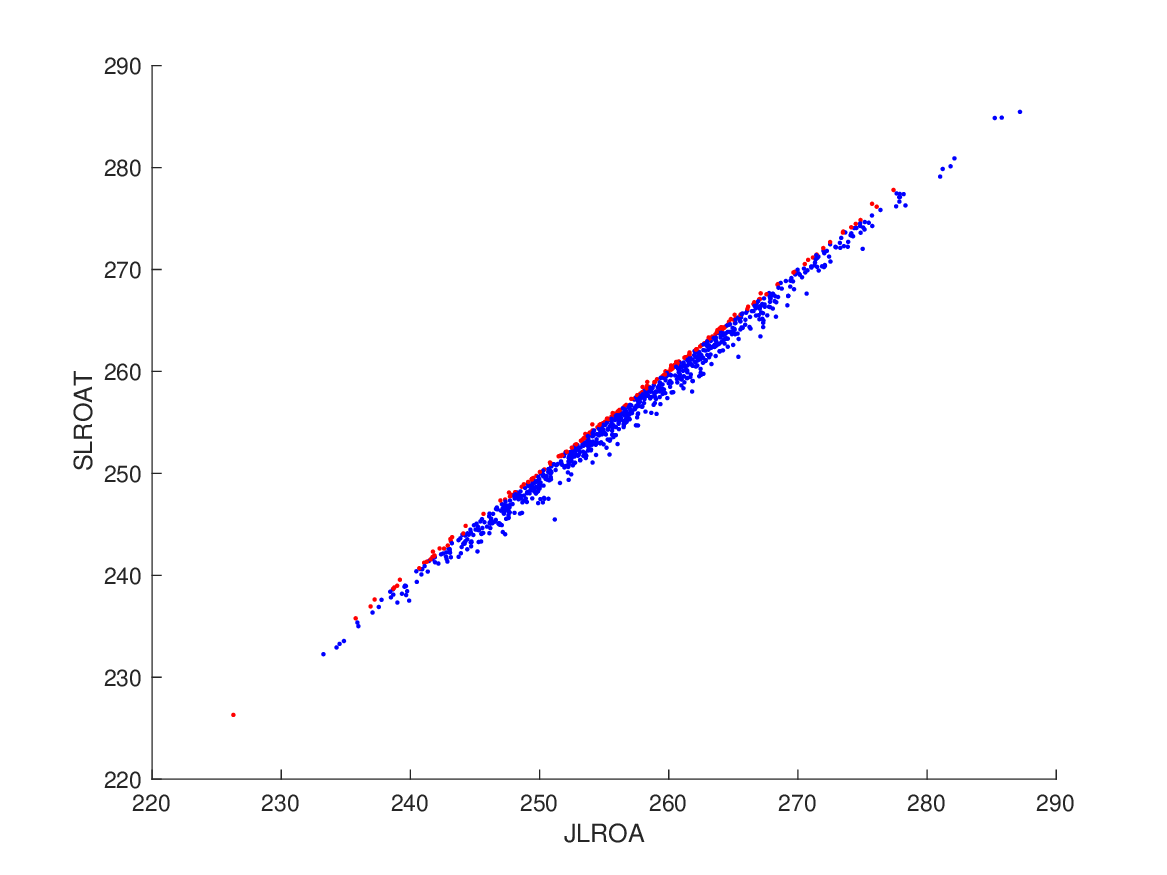}}
\caption{Distributions of points $(\textsc{JVal}, \textsc{SVal})$ in Example \ref{example-1}. The points are blue when $\textsc{JVal}$ is greater, and red when $\textsc{SVal}$ is greater.} 
\label{figure-example-1}
\end{figure}

\begin{example}\rm\label{example-2}
Let $\tens{A}\in\text{symm}(\RR^{3\times 3\times 3\times 3})$ such that
\begin{align*}
&\tenselem{A}_{1111} = 0.2883,\ \
\tenselem{A}_{1122} = -0.2485,\ \
\tenselem{A}_{1222} =  0.2972,\ \
\tenselem{A}_{1333} = -0.3619,\\
&\tenselem{A}_{2233} = 0.2127,\ \
\tenselem{A}_{1112} = -0.0031,\ \
\tenselem{A}_{1123} = -0.2939,\ \
\tenselem{A}_{1223} = 0.1862,\\
&\tenselem{A}_{2222} = 0.1241,\ \
\tenselem{A}_{2333} =  0.2727,\ \
\tenselem{A}_{1113} = 0.1973,\ \
\tenselem{A}_{1133} = 0.3847,\\
&\tenselem{A}_{1233} = 0.0919,\ \
\tenselem{A}_{2223} = -0.3420,\ \
\tenselem{A}_{3333} = -0.3054,\ \
\end{align*}
as in \cite[Example 1]{kofidis2002best} and \cite[Section 6.1]{chen2009tensor}.
It has been shown in \cite{kofidis2002best,chen2009tensor} that SHOPM ($p=1$) and SLROAT ($p=2$) fail to converge for $\tens{A}$.
We now see the convergence behaviour of JLROA algorithm.
The results of JLROA, SLROAT and LROAT algorithms are shown in \cref{figure-example-2}.
It can be seen that JLROA performances are always better than or equal to those of SLROAT and LROAT.
\end{example}

\begin{figure}[tbhp]
	\centering
	\subfloat[$p=1$]{\includegraphics[width=0.5\textwidth]{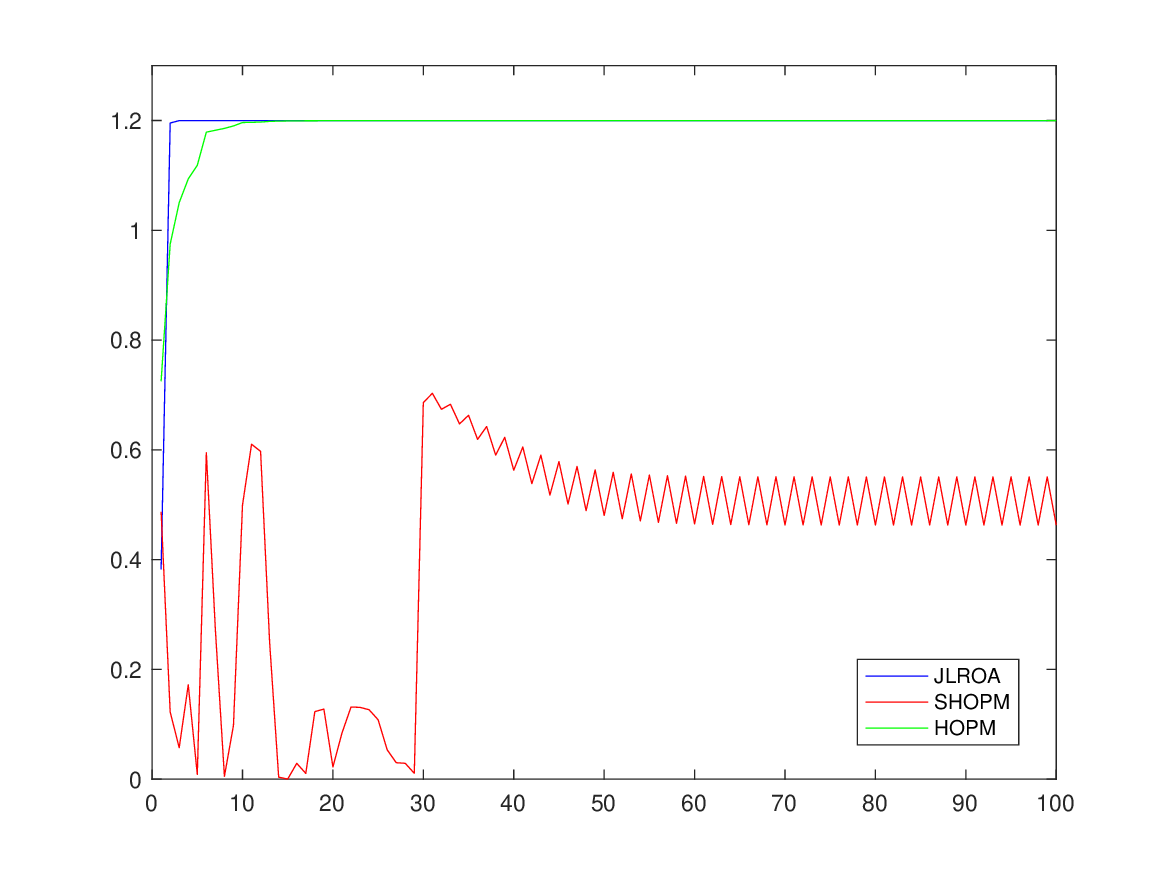}}\!\!\!
	\subfloat[$p=2$]{\includegraphics[width=0.5\textwidth]{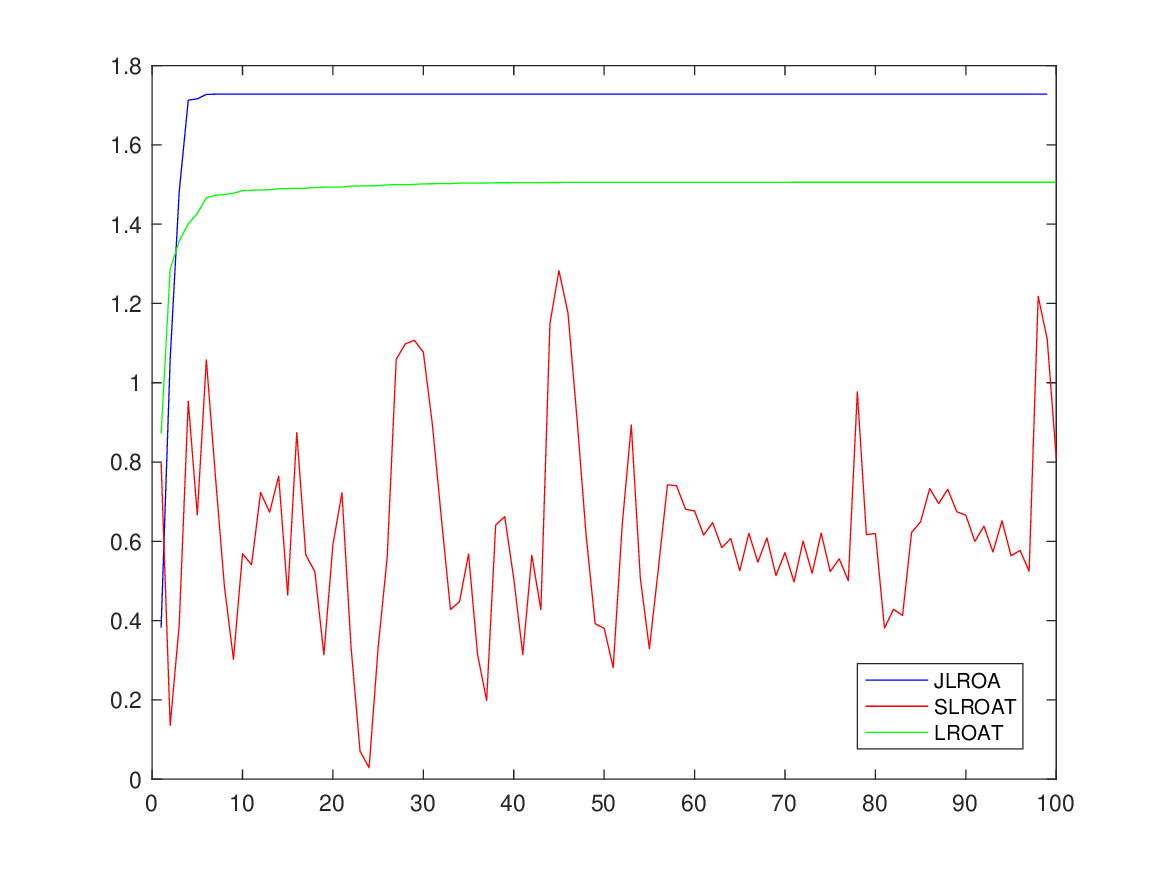}}\!\!\!
	\subfloat[$p=3$]{\includegraphics[width=0.5\textwidth]{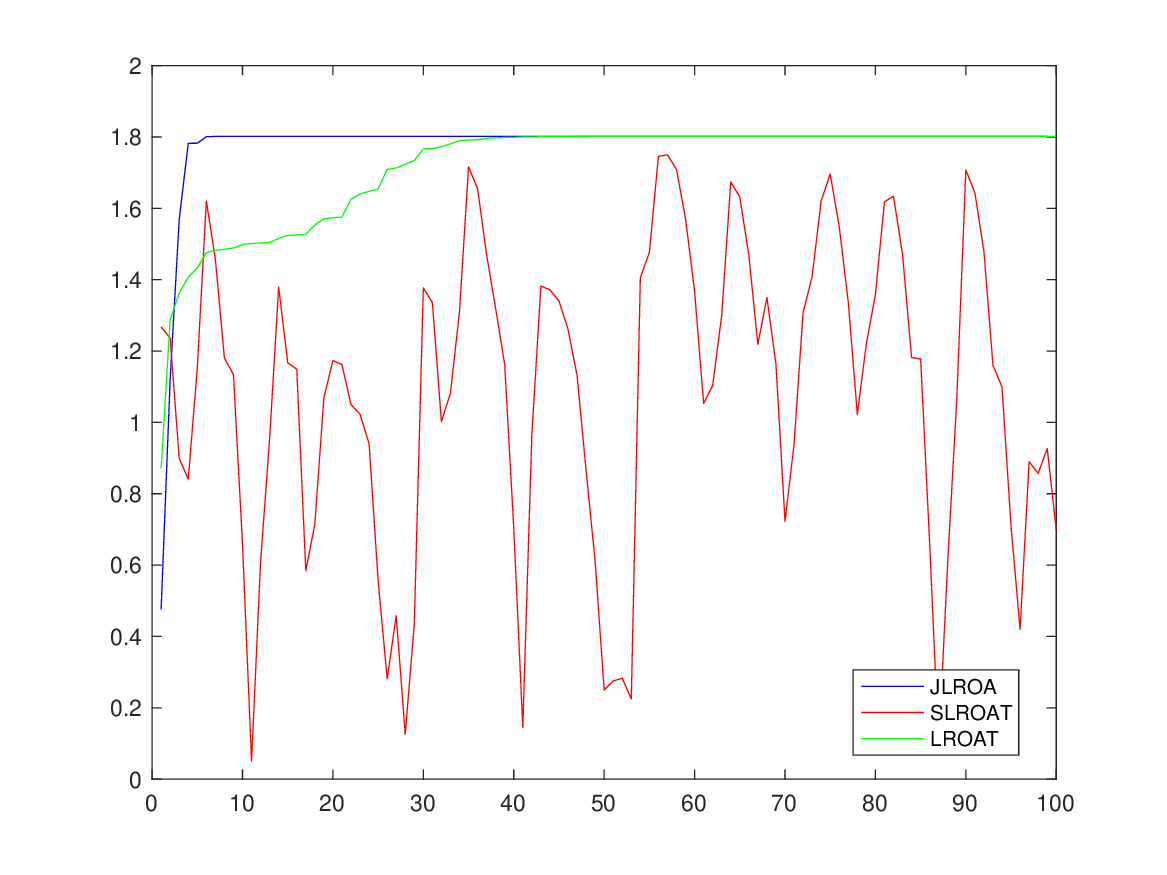}}
	\caption{Results of Example \ref{example-2}.} 
	\label{figure-example-2}
\end{figure}

\begin{example}\label{example-3}\rm
	We randomly generate 1000 tensors in $\text{symm}(\RR^{10\times 10\times 10})$,
	and run JLROA and Trust region algorithms for them.
	Denote by $\textsc{JVal}$ and $\textsc{TVal}$ the final value of
	\eqref{eq-cost-func-1} obtained by JLROA and Trust region, respectively.
	Set the following notations.\\
	(i) $\textsc{NumG}:$ the number of cases that $\textsc{JVal}$ is greater than $\textsc{TVal}$;\\
	(ii) $\textsc{NumS}:$ the number of cases that $\textsc{JVal}$ is smaller than $\textsc{TVal}$;\\
	(iii) $\textsc{NumE}:$ the number of cases that $\textsc{JVal}$ is equal\footnote{the difference is smaller than 0.0001.} to $\textsc{TVal}$;\\
	(iv) $\textsc{RatioG}:$ the average of $\textsc{JVal}/\textsc{TVal}$ when $\textsc{JVal}$ is greater than $\textsc{TVal}$;\\
	(v) $\textsc{RatioS}:$ the average of $\textsc{JVal}/\textsc{TVal}$ when $\textsc{JVal}$ is smaller than $\textsc{TVal}$.\\
	The results are shown in \cref{table-example-3} and \cref{figure-example-3}.
	It can be seen that $\textsc{RatioG}$ is very large when $p=1,2$, which means that Turst region is not so stable as JLROA in these two cases. 
	Correspondingly, Trust region algorithm has generally better performance when $p>2$.
	
	\begin{table}[h!]
		\centering
		\caption{}
		\label{table-example-3}
		\scalebox{0.9}{
			\begin{tabular}{l c c c c c}
				\toprule
				& $\textsc{NumG}$ & $\textsc{NumS}$ & $\textsc{NumE}$ & $\textsc{RatioG}$ & $\textsc{RatioS}$\\
				\midrule
				$p=1$       &  125 & 0  &  875 & 211.7822  & --- \\
				\midrule
				$p=2$       & 395 & 360 & 245 & 5.0299 & 0.9986\\
				\midrule
				$p=5$       & 431 & 555 & 14 & 1.0016 & 0.9987\\
				\midrule
				$p=8$       & 393 & 604 & 3 & 1.0011 & 0.9992 \\
				\midrule
				$p=10$       & 35  & 962 & 3  & 1.0002 & 0.9995 \\
				\bottomrule
		\end{tabular}}
	\end{table}
\end{example}

\begin{figure}[tbhp]
\centering
\subfloat[p=1]{\includegraphics[width=0.5\textwidth]{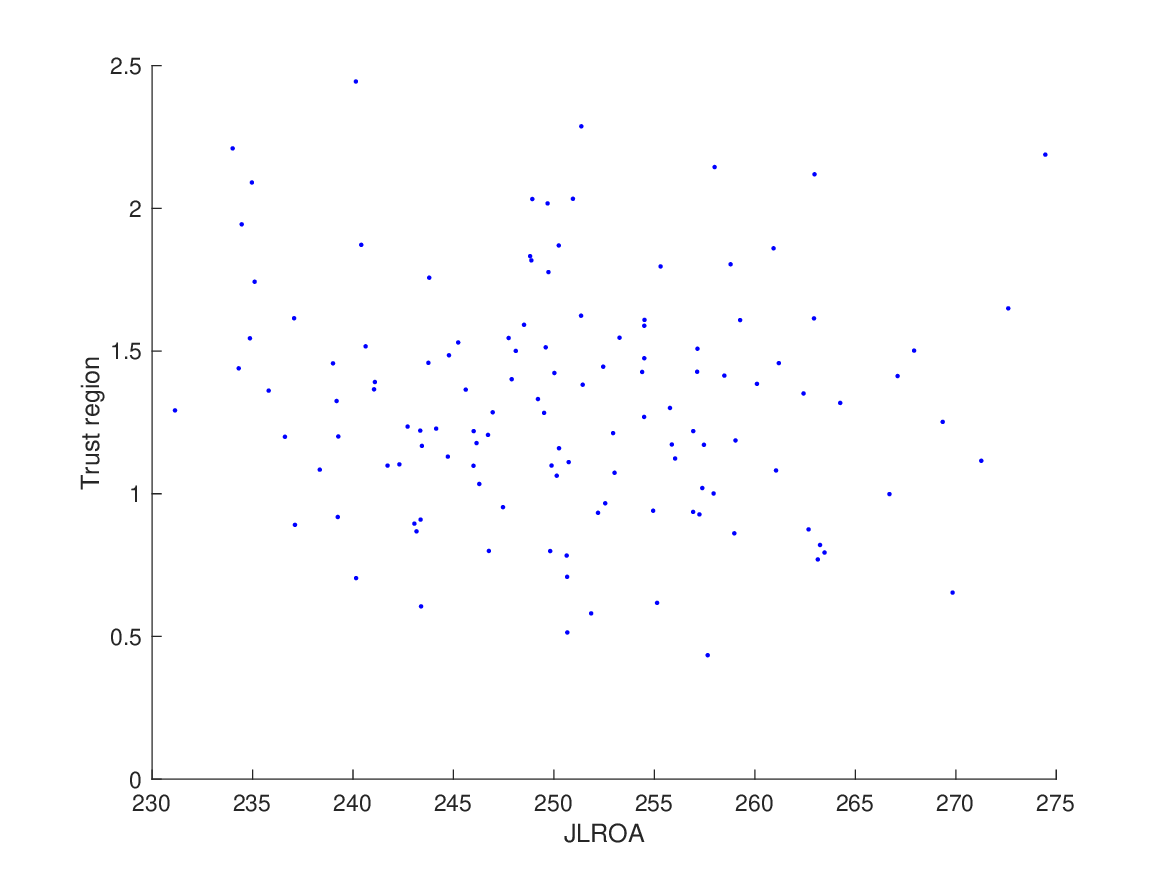}}\!\!\!
\subfloat[p=2]{\includegraphics[width=0.5\textwidth]{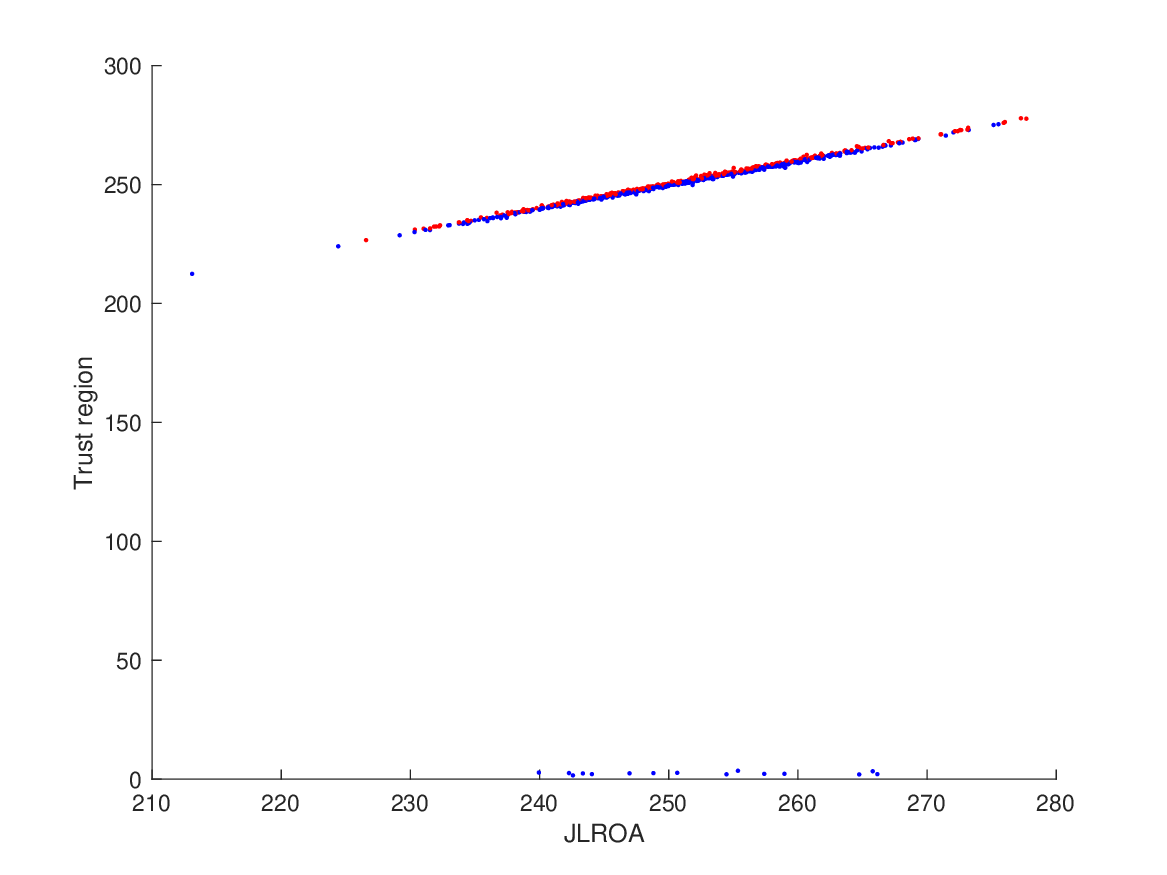}}\!\!\!
\subfloat[p=5]{\includegraphics[width=0.5\textwidth]{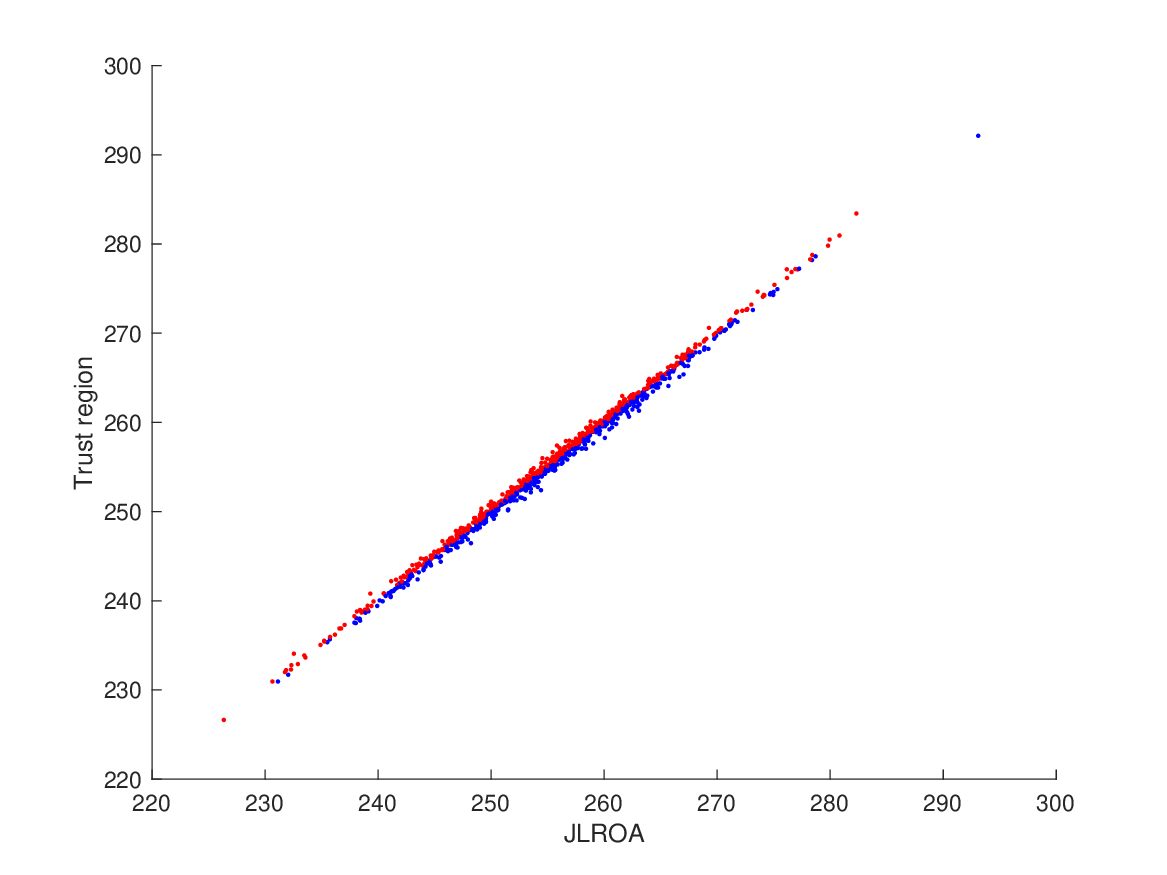}}\!\!\!
\subfloat[p=8]{\includegraphics[width=0.5\textwidth]{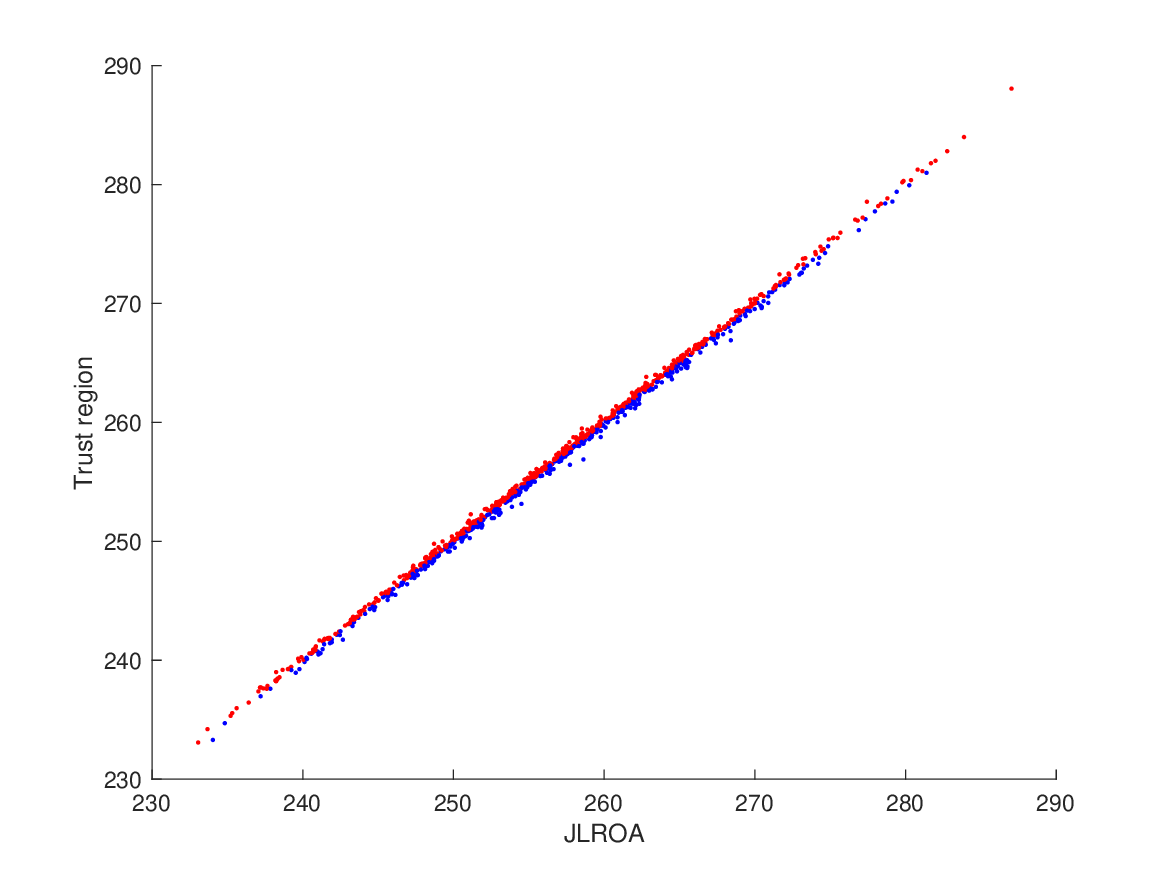}}
\caption{Distributions of points $(\textsc{JVal}, \textsc{TVal})$ in Example \ref{example-3}. The points are blue when $\textsc{JVal}$ is greater, and red when $\textsc{TVal}$ is greater.} 
\label{figure-example-3}
\end{figure}

%
%

\begin{example}\label{example-5}\rm
Let $\tens{A}\in\text{symm}(\RR^{10\times 10\times 10})$ and $p=2$.
Suppose that $\matr{Q}_{\ast}$ is an accumulation point of JLROA-G. 
To check the frequency of conditions \eqref{eq-condition-not-zero} and \eqref{eq-condition-not-zero-2} being satisfied, we define 
\begin{equation*}
\omega = \min\{|\tenselem{W}_{112}|, |\tenselem{W}_{122}|, |\tenselem{W}_{333}|, \cdots, |\tenselem{W}_{nnn}|\},
\end{equation*}
where $\tens{W}=\tens{A}(\matr{Q}_{\ast})$.
We choose the iteration $\matr{Q}_{K}$ as the approximation of an accumulation point when $K$ is large enough ($K=500$ in this experiment). 
We randomly generate $\tens{A}\in\text{symm}(\RR^{10\times 10\times 10})$ for $1000$ times,
and run JLROA-G to see the frequency that $\omega>0$ (greater than 0.0001).
The results are shown in \cref{figure-example-5},
where $\omega>0$ for $991$ times.
It can be seen that the conditions \eqref{eq-condition-not-zero} and \eqref{eq-condition-not-zero-2} are satisfied in most cases.

\end{example}

\begin{figure}[tbhp]
	\centering
	\includegraphics[width=0.7\textwidth]{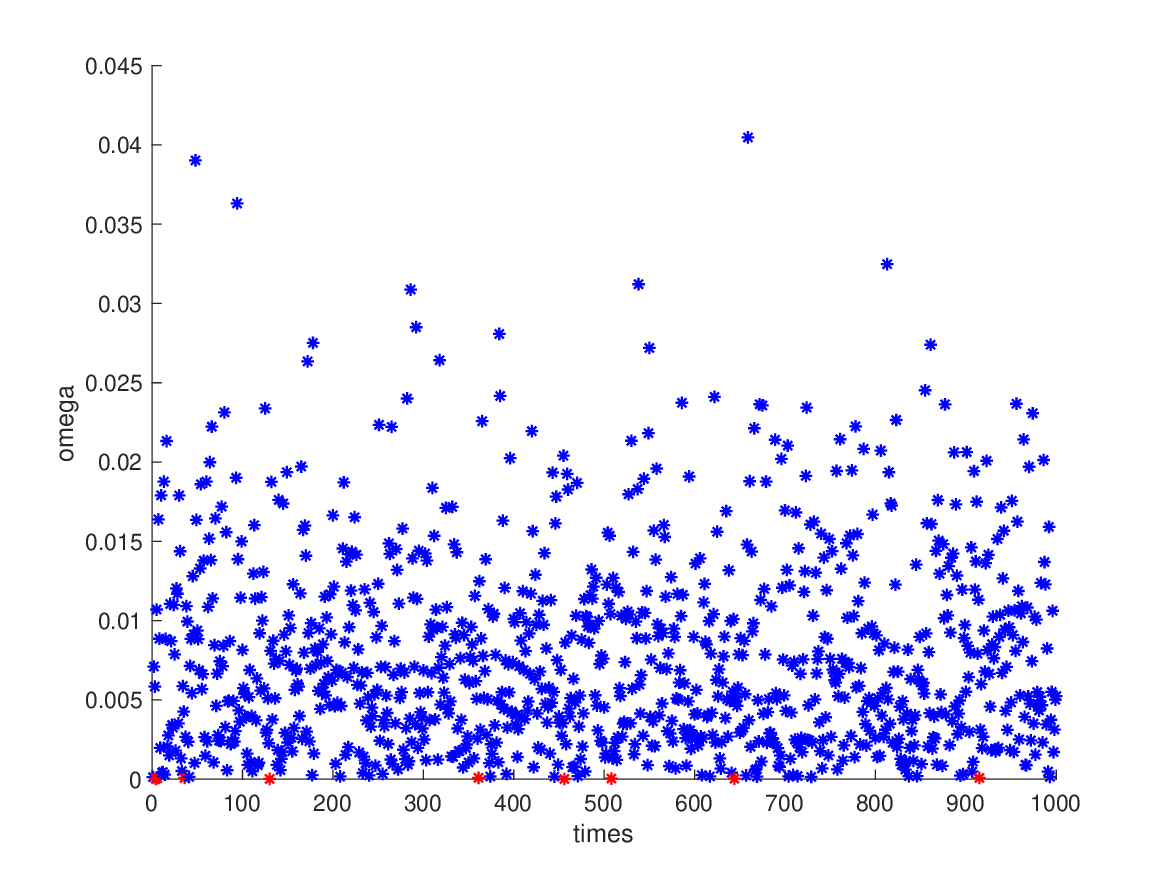}
	\caption{Results of Example \ref{example-5}. Blue points mean that $\omega>0$, while red points mean that $\omega=0$.} 
	\label{figure-example-5}
\end{figure}

\bibliographystyle{siamplain}
\bibliography{JLROA}

\end{document}